\newcommand{\integer}{\mathbb Z}
\newcommand{\real}{\mathbb R}
\newcommand{\torus}{\mathbb T}
\newcommand{\complex}{\mathbb C}
\newcommand{\Id}{\rm Id}
\newcommand{\C}{\mathcal C}
\newcommand{\D}{\mathcal D}
\newcommand{\PP}{\mathcal P}
\newcommand\beq[1]{ \begin{equation}\label{#1} }
\newcommand{\eeq}{ \end{equation} }
\newcommand\beqa[1]{ \begin{eqnarray} \label{#1}}
\newcommand{\eeqa}{ \end{eqnarray} }
\newcommand{\beqano}{ \begin{eqnarray*} }
\newcommand{\eeqano}{ \end{eqnarray*} }
\newcommand\equ[1]{{\rm (\ref{#1})}}
\newcommand{\eps}{\varepsilon}
\begin{document}
\mainmatter              
\title{KAM theory for some  dissipative systems}
\titlerunning{KAM theory for dissipative systems}  

\author{R. Calleja\inst{1} \and A. Celletti\inst{2} \and R de la Llave\inst{3}}
\authorrunning{R. Calleja et al.} 
%
\tocauthor{Renato Calleja, Alessandra Celletti, and Rafael de la Llave}
\institute{Department of Mathematics and Mechanics, IIMAS, National
  Autonomous University of Mexico (UNAM), Apdo. Postal 20-126,
  C.P. 01000, Mexico D.F., Mexico,\\
\email{calleja@mym.iimas.unam.mx},\\
\texttt{https://mym.iimas.unam.mx/renato/}
\and
Department of Mathematics, University of Roma Tor Vergata, Via
della Ricerca Scientifica 1, 00133 Roma (Italy),\\
\email{celletti@mat.uniroma2.it},\\
\texttt{http://www.mat.uniroma2.it/celletti/}
\and
School of Mathematics,
Georgia Institute of Technology,
686 Cherry St., Atlanta GA. 30332-1160,\\
\email{rafael.delallave@math.gatech.edu},\\
\texttt{https://people.math.gatech.edu/\~rll6/}
}

\maketitle              

\begin{abstract}
Dissipative systems play a very important role in several physical models, most notably in
Celestial Mechanics, where the dissipation drives the motion of natural and artificial satellites,
leading them to migration of orbits, resonant states, etc. Hence the need to develop theories that
ensure the existence of structures such as
invariant tori or periodic orbits and device efficient
computational methods.

The point of view that we adopt is that we are dealing with real
problems and that we will have to use a very wide variety of
methods. From the applications, to numerical studies to rigorous
mathematics. As we will see, all of these methods feed on each
other. The rigorous mathematics leads to efficient algorithms (and
allows us to believe the results), the numerical experiments lead
to deep mathematical conjectures, the applications benefit from
all this tools, and set meaningful goals that prevent from doing
things just because they are easy.   Of course,  the road towards
this lofty goal is not rosy and there are many false starts,
complications, etc. After several years, we can erase the false
starts from the story, but we hope to provide some flavor. Given
the rather wide scope is unavoidable that some arguments have
different standards (rigorous proofs, numerical efficiency,
conjectures). We have strived to make all those very explicit, but
may be it would be hard to keep this present. Of course, similar
programs can be applied to many problems, but in this paper we
will deal with a rather concrete set of problems.

In this work we concentrate
on the existence  of invariant tori for the specific case of dissipative systems known as \sl conformally symplectic \rm systems, which
have the property that they transform the symplectic form into a multiple of itself. To give explicit examples
of conformally symplectic systems, we will present two different models: a discrete system known as
the standard map and a continuous system known as the spin-orbit problem. In both cases we will consider
the conservative and dissipative versions, that will help to highlight the differences between the
symplectic and conformally symplectic dynamics.

For such dissipative systems we will present a KAM theorem in an a-posteriori format:
assume we start with an approximate solution satisfying a suitable
non-degeneracy condition, then we can find a true solution nearby.
The theorem does not assume that the system is close to integrable.

The method  of proof is based on
extending  geometric identities
originally developed in \cite{LlaveGJV05} for the symplectic case.
Besides leading to streamlined proofs of KAM theorem, this method
provides a very efficient algorithm which has been
implemented. Coupling an efficient numerical algorithm
with an a-posteriori theorem, we have a very
efficient way to provide rigorous estimates close to optimal.

Indeed, the method gives a criterion (the Sobolev blow up
criterion)  that allows to compute numerically the breakdown.
We will review this method as well as an extension of J. Greene's method
and present the results in the  conservative and dissipative
standard maps. Computing close to the breakdown, allows
to discover new mathematical phenomena such as the \emph{bundle
  collapse mechanism}.

We will also provide a short survey of the present state of KAM estimates
for the existence of invariant tori in the conservative and dissipative standard maps and spin-orbit problems.
\end{abstract}

\section{Introduction}
Dissipative dynamical systems play a fundamental role in shaping the motions of physical problems.
The role of dissipative forces in Celestial Mechanics is often of less importance with respect to
the conservative forces, which are mainly given by the gravitational attraction between
celestial bodies. Nevertheless dissipative forces are present at any size and time scale and their effect accumulates over time, so that even
if some effects are
negligible in a scale of centuries, they might be dominant in a scale
of a million of years.

A partial list of dissipative forces includes tidal forces, Stokes drag, Poynting-Robertson
effect, Yarkowski/YORP effects, atmospheric drag. These forces act on bodies of different
dimensions, namely planets, satellites, spacecraft, dust particles, and in different epochs
of the Solar system from the dynamics within the interplanetary nebula at the early stage
of formation of the Solar system, to present times. For example,
the effect of the
Earth's atmosphere on the orbital lifetime of artificial satellites,
happens in practical scales of time.
It becomes therefore,
important to understand  invariant structures (e.g., periodic
orbits and invariant tori) in dissipative systems.

The definition of \sl dissipative system \rm is not uniform in
the literature.
Here we will addopt that  a dissipative system has the property
that the phase space volume contracts.
In this work we will be concerned with a special class of dissipative systems known as
\sl conformally symplectic \rm systems. These systems enjoy the property that they
transform the symplectic form into a multiple of itself.

Conformally symplectic systems have appeared in many
applications (e.g. discounted systems, \cite{Bensoussan88}) or have been
studied because they are geometrically natural objects (\cite{Banyaga02}).

For applications to Celestial Mechanics, an important source of conformally symplectic systems is
that of a mechanical system with friction proportional to the velocity.
This is the case of the
so-called spin-orbit problem in Celestial Mechanics (\cite{Celletti90I,Celletti90II}), which will be presented in
Section~\ref{sec:SO}. It describes the motion of an oblate satellite around a central planet,
under some simplifying assumptions like that the orbit of the satellite is Keplerian and
that the spin-axis is perpendicular to the orbit plane. When the satellite is assumed to be rigid,
the problem is conservative, while when the satellite is assumed to be non-rigid, the problem
is affected by a tidal torque. The dissipative part of the spin-orbit problem depends upon two
parameters: the dissipative constant, which is a function of the physical properties
of the satellite, and a drift term, which depends on the (Keplerian) eccentricity of the orbit.
A discrete analogue of the spin-orbit problem is the dissipative standard map (\cite{Chirikov79}).
In Section~\ref{sec:SM} we will review conservative and dissipative
versions of the standard map.

Indeed, the presence of a drift term is fundamental in conformally symplectic systems: while
in the conservative case one can find an invariant torus with fixed frequency by adjusting
the initial conditions,
in the dissipative case it is not possible to just  tune the initial
conditions  to obtain a quasi-periodic
solution of a fixed frequency.
One needs to adjust a drift parameter to find an invariant torus with
preassigned frequency (for some appropriate choice of initial conditions).

We stress that adding a dissipation to a Hamiltonian system is a
very singular perturbation: the Hamiltonian admits quasi-periodic solutions with many frequencies,
while a system with positive dissipation leads to attractors with few quasi-periodic solutions. To obtain attractors with a fixed frequency, one needs
to adjust the drift parameters.

\vskip.1in

The existence of invariant tori is the subject of
the celebrated Kolmogorov--Arnold--Moser (KAM) theory
(\cite{Kolmogorov54,Arnold63a,Moser62}, see also \cite{Llave01c,Wayne96,Poschel01})
which, in its original formulation, proved
the persistence of invariant tori
in nearly--integrable Hamiltonian systems. The theory can be developed under two main assumptions:

- the frequency vector must satisfy a Diophantine condition (to deal with the so-called small divisors problem),

- a non--degeneracy condition must be satisfied (to ensure the solution of the cohomological equations providing
the approximate solutions).

Also, geometric properties of the system play an important role.  Notably,
the original results were developed for Hamiltonian systems, but this
has been greatly extended.

A KAM theory  for non-Hamiltonian systems
with adjustment of parameters was developed in the remarkable and pioneer paper \cite{Moser67},
and later in \cite{BroerHTB90,BroerHS96}. A KAM theory for conformally symplectic systems with adjustment
of parameters was developed in \cite{CallejaCL11} using the so-called
\emph{automatic reducibility} method
introduced in \cite{LlaveGJV05}. The paper
\cite{CallejaCL11} produces  an
a-posteriori result. A-posteriori means that the existence of
an approximate solution, which satisfies an invariance equation up to a small error,
ensures the existence of a true solution of the invariance equation, provided some non-degeneracy
conditions and smallness conditions on parameters are satisfied.

The automatic reducibility proofs
of KAM theorem provide very efficient
and stable algorithms to construct invariant tori in the symplectic (\cite{FiguerasHL17,HaroCFLM16})
and conformally symplectic (\cite{CCL2020,CCGL2020}) case. Examples of concrete (conservative and dissipative) KAM estimates
will be given in Section~\ref{sec:applications} with special reference to the standard map
and the spin-orbit problem. The a-posteriori format guarantees that these
solutions are correct. Indeed, it was proved that the algorithm leads
to a continuation method in parameters that, given enough resources
reaches arbitrarily close the boundary of the set of parameters
for which the solution exists.
In Section~\ref{sec:breakdown} we will review
the results on the empirical study of
the breakdown, which leads to
several very unexpected phenomena.
In \cite{CallejaF11} it was found numerically  that the tori -- which are normally hyperbolic-- break down because the stable bundle becomes close to the tangent,
even if the  stable Lyapunov exponent (which is given by the conformal
symplectic constant) remains away from zero. Moreover, there are remarkable
scaling effects, as shown in Section~\ref{sec:collision}.

\subsection{Other results}
The results presented in these notes are part of  a more
systematic program of providing KAM theorems in
an a-posteriori format with many consequences that, for
completeness, we shortly review below.

$\bullet$ The a-posteriori format, leads automatically to many
regularity results: deducing finitely differentiable results from
analytic ones, bootstrap of regularity, Whitney dependence on
the frequency.
We will not even mention these regularity results, but
we point that in the conformally symplectic case,
we can obtain several rather  striking geometric results.
The conformlally symplectic systems are very rigid.
A classic result that plays a role is
the \emph{paring rule} of Lyapunov exponents. The conformal
geometric structure restricts severely the Lyapunov
exponents that can appear \cite{DettmannM96b,WojtkowskiL98}.

$\bullet$ Rigidity of neighborhoods of
tori.

In \cite{CallejaCL11b} it is shown that the dynamics in a neighborbood
of a Lagrangian torus is conjugate to a rotation and a linear contraction.
In particular,  the only invariant in a neighborhood is the rotation and
all the tori with the same rotation are analytically conjugate in a
neighborbood.

$\bullet$ Greene's method.

An analogue of Greene's method (\cite{Greene79}) to compute the
analyticity breakdown is given in \cite{CCFL14}, which presents
a partial justification of the method. It is proved that when the
invariant attractor exists, then one can predict the eigenvalues
of the periodic orbits approximating the torus for parameter
values close to those of the attractor.

$\bullet$ Whiskered tori.

In \cite{CCLwhiskered,CCLwhiskered2}), one can find a theory of
whiskered tori in conformally symplectic systems.
This theory involves interactions of dynamics and geometry.
The theory allows  -- there are examples -- that the stable
and unstable bundles are trivial, but somewhat surprisingly,
concludes that the center bundles have to be trivial.

$\bullet$ The singular limit of zero dissipation.

We showed in \cite{CCLdomain} that, if one fixes the frequency, one can choose
the drift parameter as a function of the perturbation in a smooth way:
$\mu = \mu(\omega, \varepsilon) \equiv \mu_\varepsilon(\omega) $.  Note however
that $\mu_0 (\omega)=0$, but for
$\varepsilon > 0$,  the function  $\mu_\varepsilon$
is invertible  so that the function $\mu_\varepsilon(\omega)$
is a smooth function with a limit as $\varepsilon \rightarrow 0$.
Nevertheless, the sets of $\omega$ that appear have a complicated
behaviour (devil's staircase). Hence,  the floating frequency
KAM methods, e.g. \cite{Arnold63a,Poschel,ChierchiaG82},
have difficulty dealing with this limit.

One of the advantages of  the a-posteriori theorems is
that they can validate approximate solutions, no matter how
they are obtained. We have already mentioned the validation of
numerical computations. It turns out that one can also validate
formal asymptotic expansions and obtain estimates on
domains of existence of the tori in the singular limit (\cite{CCLdomain}).
This limit has also been studied numerically (\cite{BustamanteC19}),
leading to the conjecture that the Lindstedt series are Gevrey.
A proof of the conjecture is given in \cite{BustamanteL20}.

$\bullet$ Breakdown of the rotational tori.

One of the consequences of the conformal symplectic geometry
is  the ``pairing rule'' for exponents \cite{WojtkowskiL98}. Hence
the tori, which have a dynamics which is a rotation, must
have normal exponents which are $\lambda$.  The tori are normally
hyperbolic attractors. Notice that the loss of hyperbolicity
cannot happen because of the exponents break down.
This leads to the mechanism of bundle collapse that was discovered
in \cite{CallejaF11} and will be
discussed in more detail in Section~\ref{sec:collision}.

\subsection{Organization of the paper}

The work is organized as follows. In Section~\ref{sec:consdiss} we present the conservative and dissipative
standard maps and spin-orbit problems. Conformally symplectic systems and Diophantine vectors are
introduced in Section~\ref{sec:CSDC}. The definition of invariant tori and the statement
of the KAM theorem for conformally symplectic systems is given in Section~\ref{sec:tori}.
Two numerical methods for the computation of the breakdown threshold of invariant attractors
is presented in Section~\ref{sec:breakdown}. The relation between the collision of invariant
bundles and the breakdown of the tori is described in Section~\ref{sec:collision}.
Applications of KAM estimates to the conservative/dissipative standard maps and spin-orbit problems
are briefly recalled in Section~\ref{sec:applications}.

\section{Conservative/dissipative standard maps and spin-orbit problems}\label{sec:consdiss}

In this Section we present two models, a discrete and a continuous one, that will help to
have a qualitative understanding of the main features of conservative and dissipative
systems. The first example is a discrete paradigmatic model, known as the \sl standard map \rm
(see Sections~\ref{sec:SM} and \ref{sec:DSM}).
The continuous example is a physical model, known as the \sl spin-orbit problem, \rm which is closely related to the
standard map (see Section~\ref{sec:SO}). In both cases we present their conservative and dissipative formulations.

\subsection{The conservative standard map}\label{sec:SM}
The standard map is a discrete model introduced by Chirikov in \cite{Chirikov79}, which has been widely studied
to understand several features of dynamical systems,
such as regular motions, chaotic dynamics,
breakdown of invariant tori, existence of periodic orbits, etc. The standard map is a 2-dimensional
discrete system in the variables $(y,x)\in{\mathbb R}\times\torus$,
which is described by the formulas:
\beqa{SM1}
y'&=&y+\varepsilon\  V(x)\nonumber\\
x'&=&x+y'\ ,
\eeqa
where $\varepsilon>0$ is called the \sl perturbing parameter \rm and $V=V(x)$ is an analytic function.

A wide number of articles and books in the literature (see, e.g., \cite{GuckHolmes}, \cite{LichLieberman}) deals with
the classical (Chirikov) standard map (\cite{Chirikov79}) obtained setting $V(x)=\sin x$ in \equ{SM1}.

\vskip.1in

Instead of \equ{SM1} we can use an equivalent
notation and write the standard map assigning an integer
index to each iterate:
\beqa{SM2}
y_{j+1}&=&y_j+\varepsilon V(x_j)\nonumber\\
x_{j+1}&=&x_j+y_{j+1}=x_j+y_j+\varepsilon V(x_j)\qquad {\rm
for}\ \ j\geq 0\ .
\eeqa

\vskip.1in

We can easily verify that the standard map \equ{SM2} satisfies the following properties, that will be useful for
further discussion.

\vskip.1in

$A)$ The standard map is integrable for $\varepsilon=0$. In fact, for $\varepsilon=0$ one gets the formulas:
\beqano
y_{j+1}&=&y_j=y_0\nonumber\\
x_{j+1}&=&x_j+y_{j+1}=x_j+y_j=x_0+jy_0\qquad {\rm for}\ j\geq 0\ ,
\eeqano
which shows that the mapping is integrable, since $y_j$ is constant and $x_j$ increases by $y_0$.
For $\varepsilon\not=0$ but small, the map is nearly-integrable.

\vskip.1in

$B)$ The standard map is conservative, since the determinant of its Jacobian is equal to one:
$$
\det\left(%
\begin{array}{cc}
  {{\partial y'}\over {\partial y}} & {{\partial y'}\over {\partial x}} \\
  {{\partial x'}\over {\partial y}} & {{\partial x'}\over {\partial x}} \\
\end{array}%
\right)=
\det\left(%
\begin{array}{cc}
  1 & \ \ \varepsilon V_x(x_j) \\
  1 & \ \ 1+\varepsilon V_x(x_j) \\
\end{array}%
\right)=1\ .
$$

\vskip.1in

$C)$ The standard map satisfies the so-called twist property, which amounts
to requiring that for a constant $c\in\real$:
$$
\left|{{\partial x'}\over {\partial y}}\right|\geq c>0
$$
for all $(y,x)\in\real\times\torus$. From \equ{SM1} we have that the twist
property is trivially satisfied, since
$$
{{\partial x'}\over {\partial y}}=1\ .
$$

The twist  property is not satisfied when considering a \sl slight \rm modification of \equ{SM1}, yielding a discrete system which is known as
the \sl non-twist \rm standard map (see, e.g., \cite{DelCastillo1,DelCastillo2}). This mapping is described by the equations:
\beqano
y'&=&y+\varepsilon\  V(x)\qquad\qquad y\in \real\ ,\ x\in\torus\nonumber\\
x'&=&x+a(1-y'^2)
\eeqano
with $a\in\real$. In this case, the twist condition is violated along a curve in the $(y,x)$ plane.

Systems violating the twist condition appear in celestial mechanics,
for example in the critical inclination for  the motion
near an oblate planet (\cite{Kyner68}). One of the advantages of
the KAM results we will establish is that we do not need
to assume global non-degeneracy conditions on the map,
but rather some properties of the approximate
solution. We just need to assume that a $d\times d$ matrix is invertible.
The matrix is an explicit  algebraic expression on derivatives of
the approximate solution and averages.

\vskip.1in

Figure~\ref{fig:smcons} shows the graph of the iterates of the standard map for several values of
the perturbing parameter and for several initial conditions in each plot.

\begin{figure}[h!]
\centering
\includegraphics[width=12cm,height=14cm]{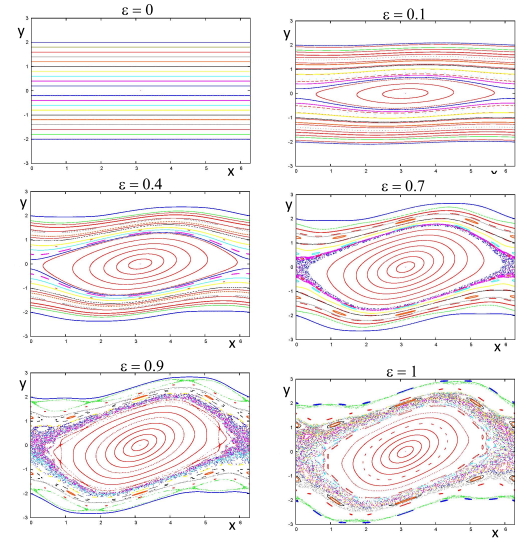}
\caption{Graphs of the conservative standard map for different values of the perturbing parameter and
different initial conditions.}
    \label{fig:smcons}
\end{figure}

From the upper left plot of Figure~\ref{fig:smcons}, we see that for $\varepsilon=0$
the system is integrable; the initial conditions has been chosen to give \sl rotational \rm quasi--periodic
curves (lying on straight lines).

When we switch-on the perturbation, even for small values as $\varepsilon=0.1$, the system becomes
non--integrable. It is easy to check that there exists a stable equilibrium point at $(\pi,0)$ and an
unstable one at $(0,0)$.
The quasi--periodic (KAM) curves are distorted with respect to the integrable case and the stable point $(\pi,0)$ is
surrounded by elliptic \sl librational \rm islands. The amplitude of the islands increases as
$\varepsilon$ gets larger, as it is shown for $\varepsilon=0.4$ where we also notice the
appearance of minor resonances. Chaotic dynamics is clearly present for $\varepsilon=0.7$
around the unstable equilibrium point, while the number of rotational quasi--periodic curves
decreases when increasing the perturbing parameter. In particular, for
$\varepsilon=0.9$ we see large chaotic regions, a few quasi--periodic curves,
new islands around higher--order periodic orbits. Finally, for $\varepsilon=1$
we have only chaotic and librational motions, while quasi--periodic curves disappear.

As we will mention in Section~\ref{sec:applications}, there is a wide literature on KAM applications to the standard map to
prove the existence of invariant rotational tori with fixed frequency, see
\cite{CellettiC95,LlaveR90,FiguerasHL17}.

\vskip.1in

The example we have presented in this Section shows a marked difference with respect to the model
that will be presented in Section~\ref{sec:DSM}, thus witnessing the divergence of the dynamical
behaviour between conservative and dissipative dynamical systems. This difference is clearly demonstrated
by the dynamics associated to the conservative and dissipative standard maps, as well as by that of more complex systems, like the
conservative and dissipative spin-orbit problems, which will be described in Section~\ref{sec:SO}.

\subsection{The dissipative standard map}\label{sec:DSM}

The dissipative standard map is obtained from \equ{SM1} adding two parameters:
a dissipative parameter $0<\lambda <1$ and a drift parameter $\mu$.
For $(y,x)\in\real\times \torus$, the equations describing
the dissipative standard map are the following:
\beqa{DSM}
y'&=&\lambda y+\mu+\varepsilon\ V(x)\nonumber\\
x'&=&x+y'\ ,
\eeqa
where $\lambda,\varepsilon\in\real_+$, $\mu\in\real$.
We remark that we obtain the conservative standard map when $\lambda =1$ and $\mu=0$.
We also remark that the Jacobian of the mapping \equ {DSM} is equal to $\lambda$, which gives a measure of the rate of
contraction or expansion of the area of the phase space.
There are several results related to the existence of attractors in the dissipative standard map;
a partial list of papers is the following:
\cite{Bohr1984,BohrB1984,Feudel,KetojaS1997,KimL1992,Schmidt19852994,VlasovaZ1984,Wenzel19916550,Yamaguchi1986307}.
Rigorous mathematical works on strange attractors for dissipative 2-D maps with twist are
\cite{Levi81,WangY08,LinY08}.

It is also important to stress that for $\varepsilon=0$ the trajectory
$\{y\equiv {\mu\over {1-\lambda }}\}\times \torus$, or equivalently
$\{\omega\equiv {\mu\over {1-\lambda }}\}\times \torus$, is invariant.
In fact, for $\varepsilon=0$ we have $y'=\lambda y+\mu$ and since we
are looking for an invariant object, we need to have $y'=y$. Hence, we
must solve the equation
\beq{y}
y=\lambda y+\mu\ .
\eeq
On the other hand, the frequency
$\omega$ associated to the standard map is, by definition, given by
$$
\omega=\lim_{j\to\infty} {x_j\over j}\ ,
$$
which gives $\omega=y$. Combining this last results with \equ{y}, we obtain
$$
\omega= {\mu\over {1-\lambda}}\ ,
$$
which shows the strong relation between the frequency and the drift, which
cannot be chosen independently.
In particular, if we fix the frequency (as it will be required in the KAM theorem of Section~\ref{sec:KAMtheorem}), then we
need to tune properly the drift parameter $\mu$. This is a substantial difference
with respect to the conservative case; dissipative dynamical systems will require a procedure to prove KAM theory
different than in the conservative case.

\vskip.1in

The dynamics associated to the dissipative standard map admits (see Figure~\ref{figsmd}) attracting
periodic orbits, invariant curve attractors; for different parameters and
initial conditions, there appear also strange
attractors which have an intricate geometrical structure (\cite{LichLieberman,WangY08}):
introducing a suitable definition of dimension, the strange
attractors are shown to have, for some parameter
values, a non--integer dimension (namely a
\sl fractal \rm dimension).
We will not consider these cases and concentrate in the cases
when the attractor is a one-dimensional smooth torus and
the motion is smoothly conjugate to a rotation.

We remark that, due to the dissipative character of
the map, there might exist at most one invariant curve attractor, while there might be
more coexisting periodic orbits (see Figure~\ref{figsmd}, panel c) and \cite{Feudel}) or strange attractors.

The  existence and breakdown of
smooth invariant tori in the dissipative standard map have been recently studied
in \cite{CCL2020} (see also \cite{CallejaF11}).

\begin{figure}[ht]
\centering
\includegraphics[width=6cm,height=4cm]{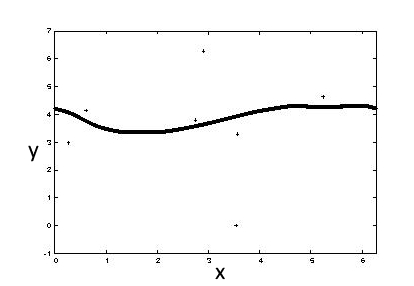}
\includegraphics[width=6cm,height=4cm]{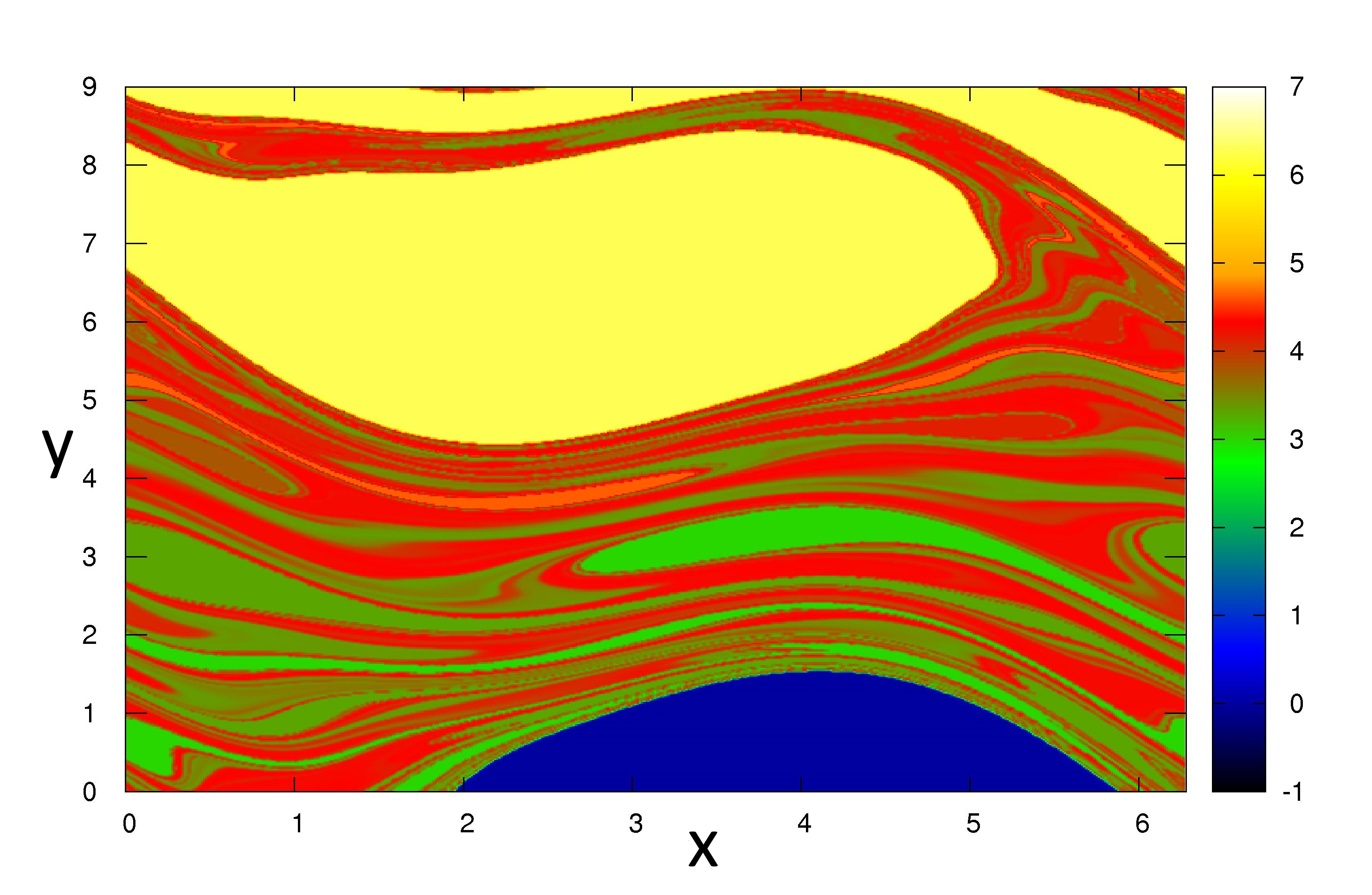}
\caption{Left: Attractors for the dissipative standard map with $\varepsilon=0.9$, $\lambda=0.91$,
$\mu=2\pi(1-\lambda)({{\sqrt{5}-1}\over 2})$.
Right: The corresponding basins of attraction using a color scale providing the frequency. }
\label{figsmd}
\end{figure}

Each of the attractors of Figure~\ref{figsmd} is characterized by an associated
\sl basin of attraction, \rm which is composed by the set of initial conditions
$(x_0,y_0)$ whose evolution ends on the given attractor. Figure~\ref{figsmd}, right, shows
the basins of attraction for the case in Figure~\ref{figsmd}, left; they have been
obtained taking $500\times 500$ random initial conditions and looking at their evolution after
having performed a number of preliminary iterations.

\vskip.1in

We want to stress that the role of the drift parameter $\mu$ is of paramount importance
in dissipative systems, since an inappropriate choice might prevent to find a
specific attractor. An example is given in Figure~\ref{figdrif}, where we look for the
torus with frequency equal to the golden ratio multiplied by the factor $2\pi$,
namely $\omega=2\pi\, {{\sqrt{5}-1}\over 2}\simeq 3.8832$, for the dissipative standard
map with $\varepsilon=0.1$, $\lambda=0.9$. The upper left panel shows that taking
$\mu=0$, the solution spirals on the point attractor at $(\pi,0)$; taking $\mu=0.1$
(Figure~\ref{figdrif}, upper right panel) leads
to an attractor which has frequency different than $\omega$, while the right choice
corresponds to $\mu=0.0617984$ as in the left bottom panel of Figure~\ref{figdrif}.
We present in Figure~\ref{figdrif}, bottom right panel, the behaviour of the drift
as a function of the dissipative parameter $\lambda$, which shows that $\mu$ tends
to zero in the limit of the conservative case, as it is expected.

\begin{figure}[h!]
\centering
\includegraphics[width=6cm,height=4.5cm]{./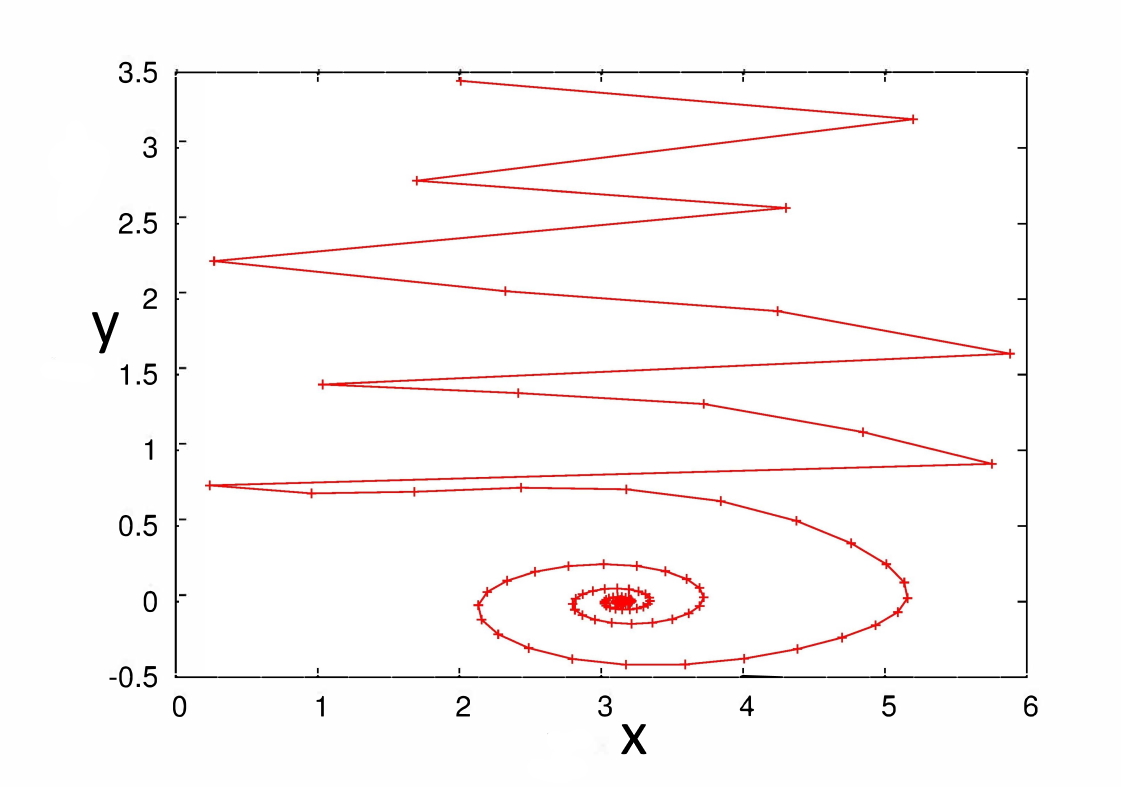}
\includegraphics[width=6cm,height=4.5cm]{./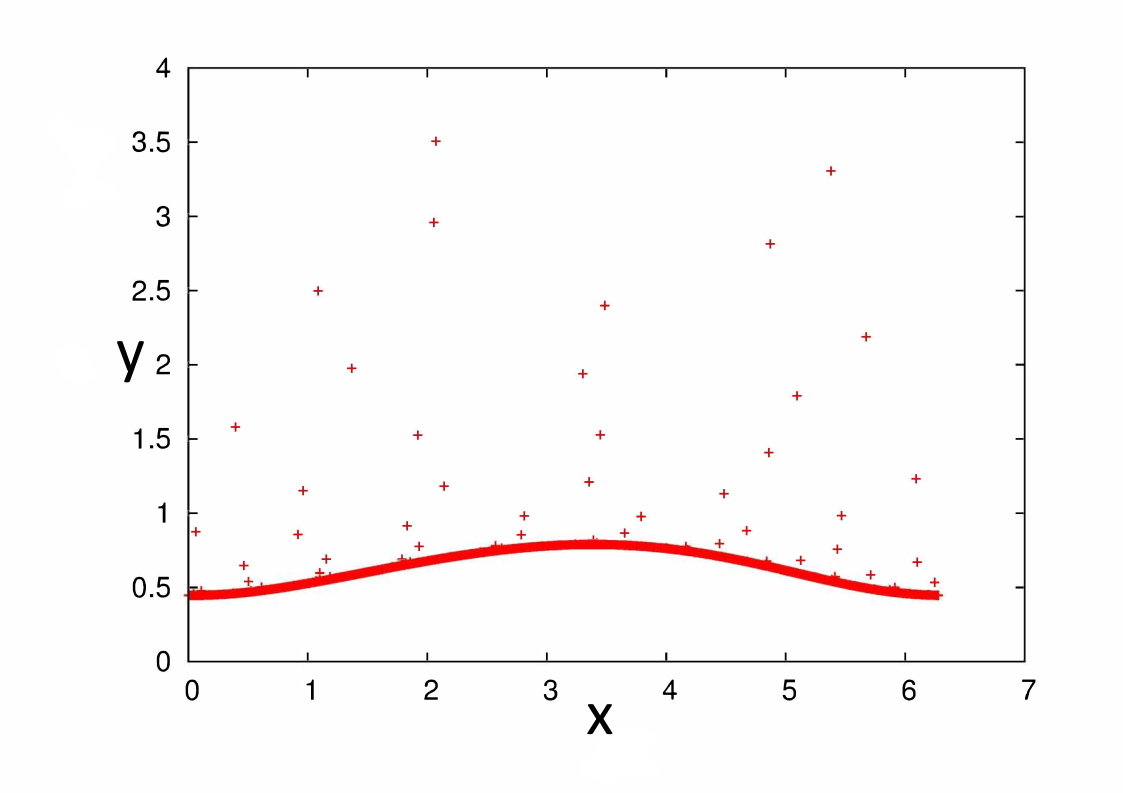}\\
\includegraphics[width=6cm,height=4.5cm]{./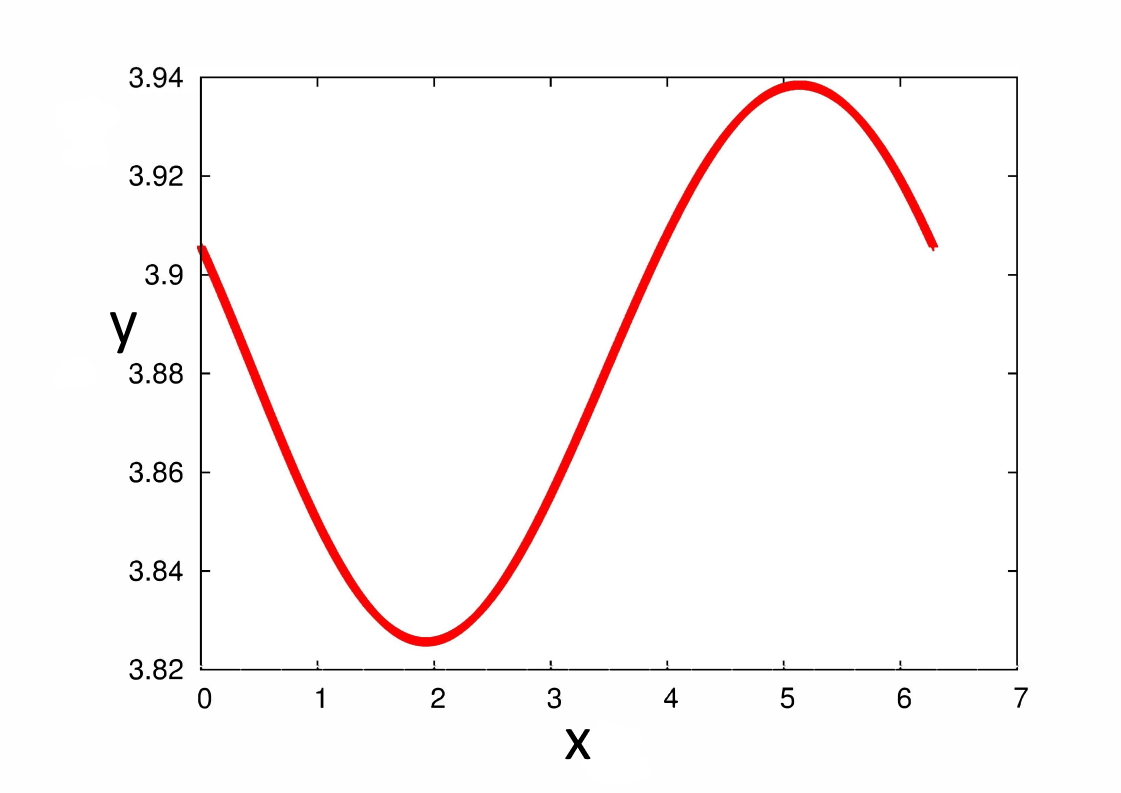}
\includegraphics[width=6cm,height=4.5cm]{./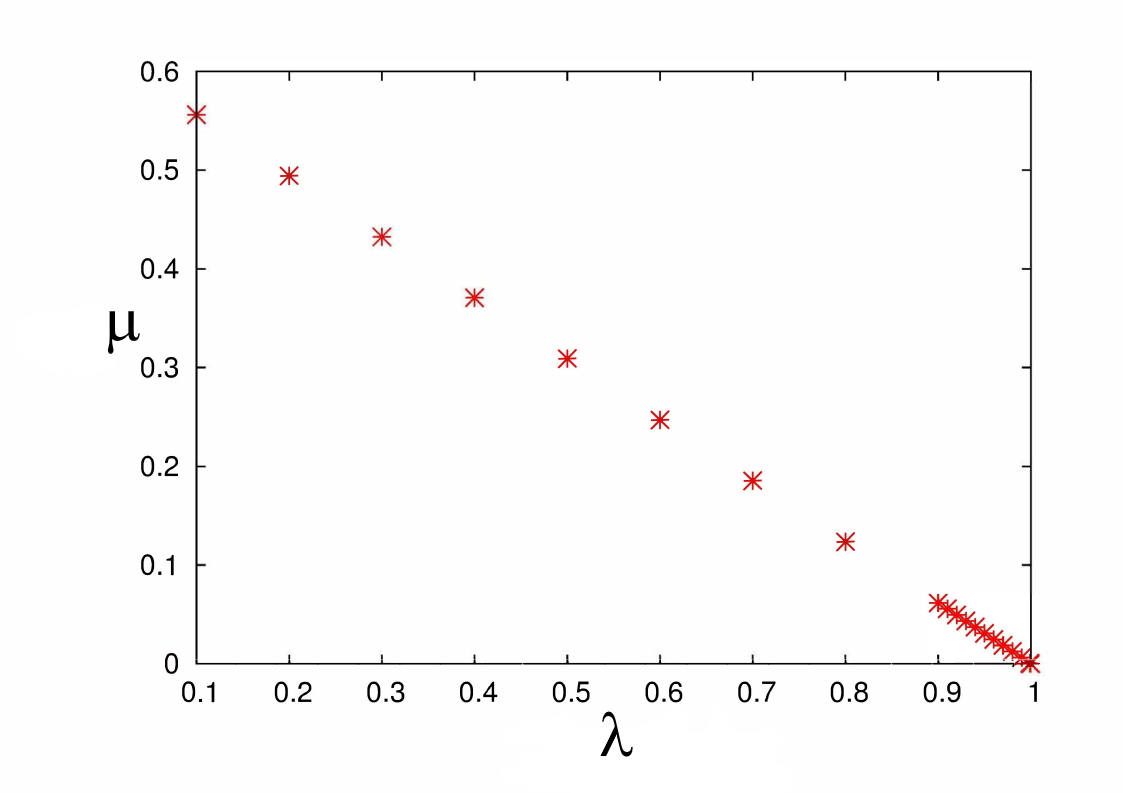}
\caption{The dissipative standard map for different values of the drift:
$\mu=0$ upper left, $\mu=0.1$ upper right, $\mu=0.0617984$ bottom left.
Graph of $\mu$ vs. $\lambda$, bottom right.}\label{figdrif}
\end{figure}

The twist condition for the dissipative standard map is a condition that now involves
the parameters. A non-twist version of the dissipative standard map is
the following map,
\beqa{DSNTM}
y'&=&\lambda y+\varepsilon\ V(x)\nonumber\\
x'&=&x+(y'-a)^2 + \mu\ .
\eeqa
In figure \ref{rotnumDSNTM}, we notice that this map has parameter values where the rotation
number does not change in a monotone direction when we change parameter $a$. See
\cite{Cal-Can-Har-20} for a study of the invariant circles of the map \equ{DSNTM}.
\begin{figure}[h!]
\centering
\includegraphics[width=6cm,height=4cm]{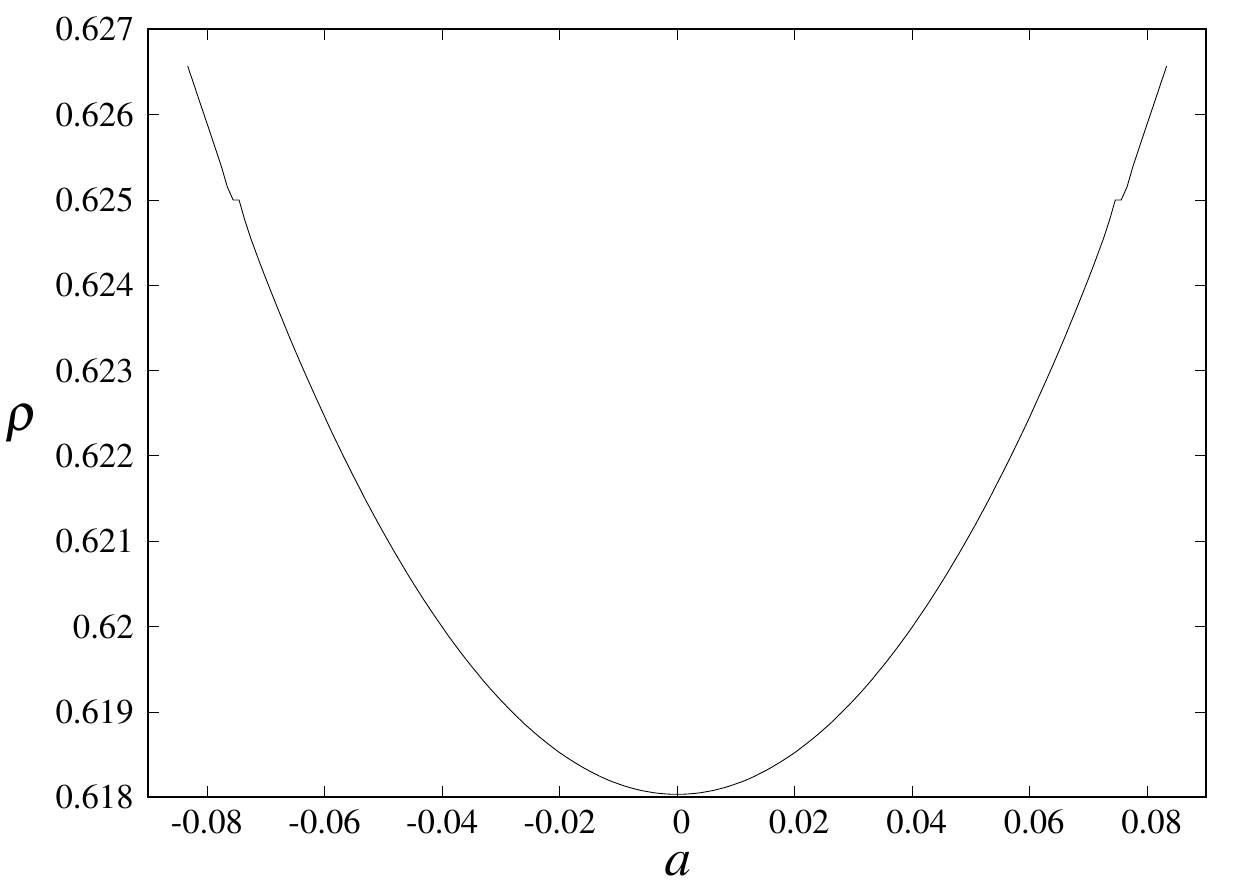}
\caption{Rotation number $\rho$ in the map \equ{DSNTM} w.r.t. the parameter $a$.
  Reproduced from \cite{Cal-Can-Har-20}.}
\label{rotnumDSNTM}
\end{figure}

\subsection{The spin-orbit problems}\label{sec:SO}
An interesting example of a continuous system which shows the main dynamical features
of regular and chaotic invariant objects is the so-called spin-orbit problem in Celestial
Mechanics. The conservative version of the model is based upon the following assumptions.
We consider a triaxial satellite, say $\cal S$, with principal moments
of inertia $I_1<I_2<I_3$. We assume that the satellite moves on a Keplerian
orbit around a central planet, say $\cal P$, while it rotates around a
spin--axis perpendicular to the orbit plane and coinciding with its
shortest physical axis.

We take a reference system centered in the planet and with the horizontal axis
coinciding with the direction of the semimajor axis.
We denote by $r$ the orbital radius, by $f$ the true anomaly, while we denote by $x$
the angle between the semimajor axis and the direction of the longest axis of the
ellipsoidal satellite (see Figure~\ref{spinorbit}).

\begin{figure}[ht]
\centering
\includegraphics[width=8cm,height=4cm]{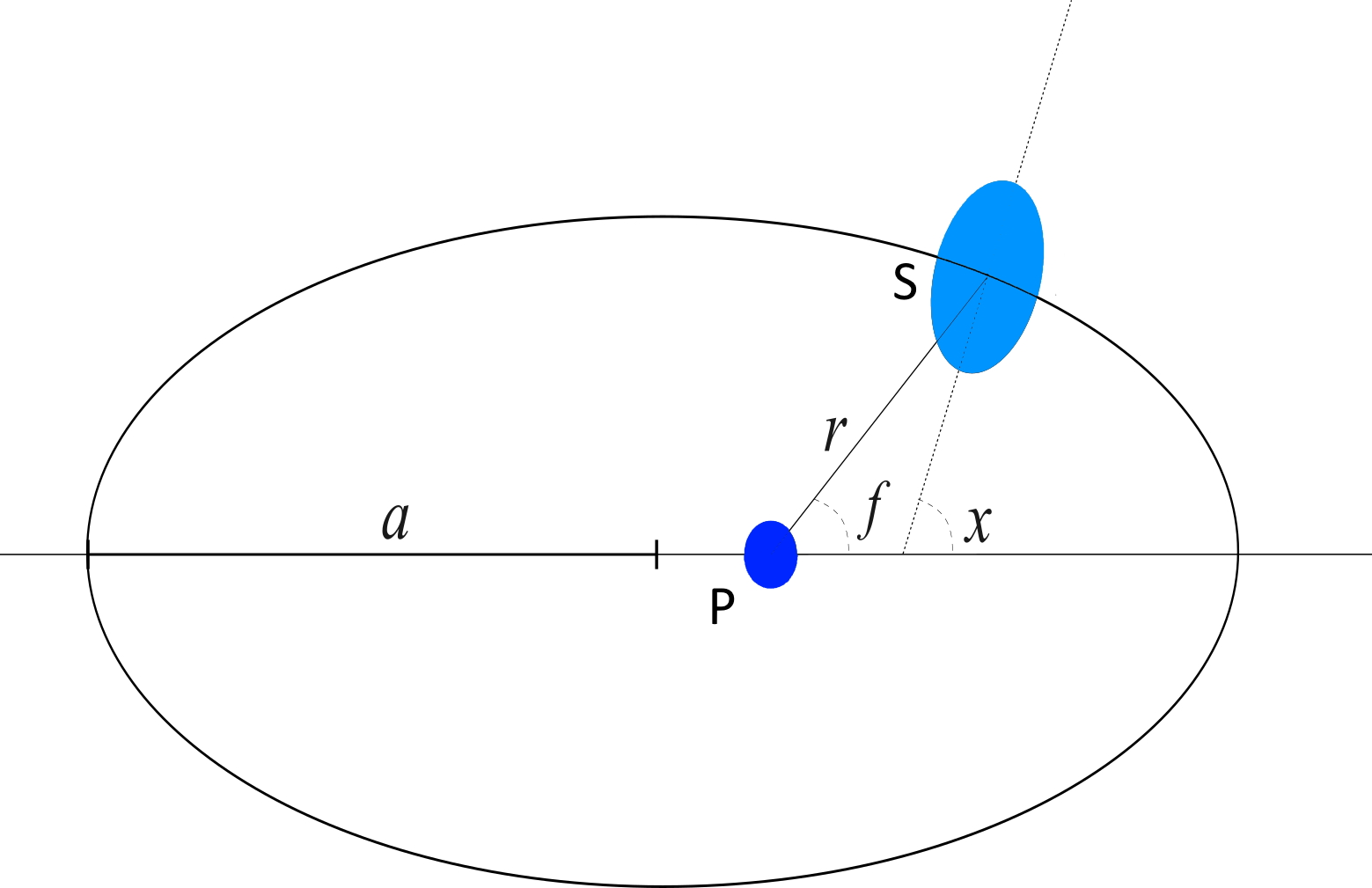}
\vglue0.1cm
\caption{The spin-orbit problem.}\label{spinorbit}
\end{figure}

The equation of motion describing the conservative spin-orbit problem is
\beq{so}
\ddot x\ +\varepsilon\ \left({a\over r}\right)^3\sin(2x-2f)=0\ ,
\eeq
where $\varepsilon={3\over 2}{{I_2-I_1}\over I_3}$ is a parameter which measures the equatorial
flattening of the satellite. Equation \equ{so} is associated to the one-dimensional, time-dependent
Hamiltonian function:
\beq{soH}
{\cal H}(y,x,t)={y^2\over 2}-{\varepsilon\over 2} \Big({a\over r(t)}\Big)^3 \cos(2x-2f(t))\ .
\eeq
Due to the assumptions of the model, the quantities $r$ and $f$ are known functions of
the time, being the solution of Kepler's problem which determines the elliptical orbit
of the satellite. They depend on the orbital eccentricity, which plays the role of
an additional parameter.

It is important to observe that:

- the Hamiltonian \equ{soH} is integrable whenever $\varepsilon=0$, namely the satellite has equatorial symmetry with $I_1=I_2$;

- the Hamiltonian \equ{soH} is integrable when the eccentricity is equal to zero, since the orbit becomes
circular, namely $r=a$ and $f$ coincides with the mean anomaly, which is proportional to time.

The existence and breakdown of invariant tori in the conservative spin-orbit problem have been investigated
in \cite{Celletti90I,Celletti90II}.

We remark that Hamilton's equations associated to \equ{soH} are given by
\beqa{SOeq}
\dot x&=&y\nonumber\\
\dot y&=&-\varepsilon \Big({a\over r(t)}\Big)^3 \sin(2x-2f(t))\ .
\eeqa
Integrating \equ{SOeq} with a modified Euler's method with time-step $h$, we obtain a
discrete system which retains the many features of the conservative standard map when taking the solution on the Poincar\'e
map at time intervals multiple of $2\pi$:
\beqano
y_{n+1}&=&y_n-\varepsilon \Big({a\over r(t_n)}\Big)^3 \sin(2x_n-2f(t_n))\ h\nonumber\\
x_{n+1}&=&x_n+y_{n+1}\ h\nonumber\\
t_{n+1}&=&t_n+h\ .
\eeqano

\vskip.1in

The dissipative spin-orbit problem is obtained by taking into account that the satellite
is non rigid and therefore it is subject to a tidal torque. The equation of motion
including a model for the tidal torque can be written as
\beq{sodiss1}
\ddot x\ +\varepsilon\ \left({a\over r}\right)^3\sin(2x-2f)=-K_d\ [L(e,t)\dot x-N(e,t)]\ ,
\eeq
where
$$
L(e,t)={a^6\over r^6}\ ,\qquad N(e,t)={a^6\over r^6}\ \dot f\
$$
(see, e.g., \cite{celletti2010,peale}).
The coefficient $K_d$ is called the dissipative constant, and depends on the physical
and orbital features of the body:
$$
K_d=3n {k_2\over {\xi Q}} \left({R_e\over a}\right)^3 {M\over m}\ ,
$$
where $n$ is the mean motion, $k_2$ is the so-called Love number (depending on the structure of the
satellite), $Q$ is called the quality factor (it compares the frequency of oscillation of the
system to the rate of dissipation of the energy), $\xi$ is a structure constant such that
$I_3 = \xi mR_e^2$ with $R_e$ the equatorial radius, $M$ is the mass of the planet, $m$ is the
mass of the satellite. For bodies like the Moon or Mercury, realistic values
are $\varepsilon=10^{-4}$ and $K_d=10^{-8}$.

The expression for the tidal torque can be simplified by assuming (as, e.g., in \cite{LaskarC}) that the
dynamics is ruled by the averages of $L(e, t)$ and $N(e, t)$ over one orbital period. The averaged
quantities are given by
\beqano
{\overline L}(e)&=&{1\over {(1-e^2)^{9\over 2}}}\ \left(1+3e^2+{3\over 8}e^4\right)\ ,\nonumber\\
{\overline N}(e)&=&{1\over {(1-e^2)^6}}\ \left(1+{{15}\over 2}e^2+{{45}\over 8}e^4+{5\over {16}}e^6\right)\ .
\eeqano
Hence, we obtain the following equation of motion in the averaged case:
\beq{SOdiss}
\ddot x+\varepsilon\ \Big({a\over r}\Big)^3
\sin(2x-2f)=-K_d\ \Big({\overline L}(e)\dot{x}-{\overline N}(e)\Big) \ ,
\eeq
We can refer to the quantity $\lambda=K_d {\overline L}(e)$ as the dissipative parameter and
to $\mu={{{\overline N}(e)}\over {{\overline L}(e)}}$ as the drift parameter.

Let us write \equ{SOdiss} in normal form as
\beqa{SOeqdiss}
\dot x&=&y\nonumber\\
\dot y&=&-\varepsilon \Big({a\over r(t)}\Big)^3 \sin(2x-2f(t))-\lambda(y-\mu)\ .
\eeqa
Similarly to the conservative case, the integration of \equ{SOeqdiss} with a modified Euler's method with time-step $h$,
leads to a discrete system similar to the dissipative standard map with dissipative and drift parameters,
when taking the solution on the Poincar\'e
map at time intervals multiple of $2\pi$:
\beqano
y_{n+1}&=&(1-\lambda h) y_n+\lambda\,\mu\,h-
\varepsilon \Big({a\over r(t_n)}\Big)^3 \sin(2x_n-2f(t_n))\ h\nonumber\\
x_{n+1}&=&x_n+y_{n+1}\ h\nonumber\\
t_{n+1}&=&t_n+h\ .
\eeqano

As we will  mention  in Section~\ref{sec:applications}, the existence and breakdown of invariant attractors
in the dissipative spin-orbit problem have been studied in \cite{CCGL2020} through an application
of KAM theory for conformally symplectic systems and through suitable numerical methods.

\section{Conformally symplectic systems and Diophantine vectors}\label{sec:CSDC}

In this section we give the definition of conformally symplectic systems for maps and flows (see
Section~\ref{sec:CS}) and we introduce the set of Diophantine vectors for discrete and continuous
systems (see Section~\ref{sec:DV}).

\subsection{Discrete and continuous conformally symplectic systems}\label{sec:CS}
An important class of dissipative dynamical systems is given by the \sl conformally symplectic \rm systems;
the dissipative standard map is an example of a conformally symplectic discrete system,
while the dissipative spin-orbit problem is an example of a conformally symplectic continuous system.

Before giving the formal definition, let us say that conformally symplectic systems are characterized by the property that they
transform the symplectic form into a multiple of itself. Beside the examples mentioned before, we stress that
conformally symplectic models can be found in different fields, e.g.
the Euler-Lagrange equations of exponentially discounted systems (\cite{Bensoussan88}, typically found in finance, when inflation is
present and one needs to minimize the cost in present money) or Gaussian thermostats (\cite{DettmannM96,WojtkowskiL98},
namely mechanical systems with forcing and a
thermostating term based on the Gauss Least Constraint Principle for nonholonomic constraints).

\vskip.1in

Let us start to introduce the notion of $2n$-dimensional conformally symplectic maps.
Let ${\cal M}  =  U\times \torus^n$ be the phase space with $U\subseteq \real^n$ an open, simply
connected domain with smooth boundary; the phase space ${\cal M}$ is endowed with
the standard scalar product and a symplectic form $\Omega$, represented by a
matrix $J$ at the point ${\underline z}$ acting on vectors ${\underline u},{\underline v}\in\real^n$
as $\Omega({\underline u},{\underline v})=({\underline u},J({\underline z}){\underline v})$ with
$(\cdot,\cdot)$ denoting the scalar product. Note that the matrix $J$
depends not only on the symplectic form but on the metric considered.

\begin{definition} \label{def:conformallysymplectic}
A diffeomorphism $f$ on $\cal M$ is \sl
conformally symplectic, if there exists a function $\lambda :
{\cal M}\to\real$ such that, denoting by $f^*$ the pull--back of $f$, we have:
\beq{def}
f^* \Omega = \lambda\Omega\ .
\eeq
\end{definition}

\vskip.1in

We remark that for $n=1$ any diffeomorphism is conformally symplectic with $\lambda$ depending on the
coordinates, namely one can take $\lambda(x)=det(Df(x))$ or $\lambda(x)=-det(Df(x))$. Instead, for
$n\ge 2$ one obtains that $\lambda$ is a constant. In fact, taking the exterior derivative of
$f^*\Omega=\lambda\Omega$, one obtains:
$$
d(f^*\Omega)=f^*\ d\Omega=0 \ =\ d\lambda\wedge \Omega+\lambda\wedge d\Omega=d\lambda\wedge \Omega\ ,
$$
which gives $d\lambda =0$; since the manifold is simply connected, then $\lambda$ is equal to a constant (see \cite{CallejaCL11}).

We also remark that for $\lambda=1$ (and $\mu=0$) we recover the symplectic case.

\vskip.1in

Let us give some explicit examples which might help to clarify the meaning of Definition~\ref{def:conformallysymplectic}.
First, we notice that we can re-formulate the notion of conformally symplectic by saying that the diffeomorphism $f$ is conformally symplectic if
\beq{CSdef}
Df^T\, J\, Df = \lambda\ J\ ,
\eeq
where the superscript $T$ denotes transposition.
In fact, from \equ{def} we have:
\beqano
f^\ast\Omega=\lambda\Omega &\Leftrightarrow&
\Omega(Df\ {\underline u},Df\ {\underline v})=\lambda\ \Omega({\underline u},{\underline v})\nonumber\\
&\Leftrightarrow& (Df\ {\underline u},J\, Df\ {\underline v})=\lambda\ ({\underline u},J\ {\underline v})\nonumber\\
&\Leftrightarrow& (u,Df^T\, J\, Df\,{\underline v})=({\underline u},\lambda\, J\ {\underline v})\nonumber\\
&\Leftrightarrow&Df^T\, J\, Df = \lambda\ J\ .
\eeqano

\vskip.1in

An example of a conformally symplectic diffeomorphism is given by the dissipative standard map.
Recalling \equ{DSM}, we have that \equ{CSdef} is satisfied, as shown below:
$$
\left(\begin{array}{cc}
  \lambda & \lambda \\
  \varepsilon V_x & 1+\varepsilon V_x \\
 \end{array}%
\right)\
\left(\begin{array}{cc}
  0 & 1 \\
  -1 & 0 \\
 \end{array}%
\right)\
\left(\begin{array}{cc}
  \lambda & \varepsilon V_x \\
  \lambda & 1+\varepsilon V_x \\
 \end{array}%
\right)=
\left(\begin{array}{cc}
  0 & \lambda \\
  -\lambda & 0 \\
 \end{array}%
\right)\
\ =\lambda\, J\ .
$$

\vskip.1in

An example of a map which does not satisfy the conformally
symplectic condition \equ{CSdef} is given by the following
4-dimensional dissipative standard map with conformal
factors $\lambda_1$, $\lambda_2$ with $\lambda_1\not=\lambda_2$:
\beqano
y_1'&=&\lambda_1 y_1+\mu_1+\varepsilon V_1(x_1,x_2)\nonumber\\
y_2'&=&\lambda_2 y_2+\mu_2+\varepsilon V_2(x_1,x_2)\nonumber\\
x_1'&=&x_1+y_1'\nonumber\\
x_2'&=&x_2+y_2'\ . \eeqano In fact, even for $\varepsilon=0$, we
obtain that \equ{CSdef} is not satisfied:
$$
Df^T\, J\, Df =
\left(\begin{array}{cccc}
0 & 0 & \lambda_1 & 0 \\
0 & 0 & 0 & \lambda_2\\
-\lambda_1 & 0 & 0 & 0\\
0 & -\lambda_2 & 0 & 0\\
\end{array}%
\right)
\not=\lambda\
\left(\begin{array}{cccc}
0 & 0 & 1 & 0 \\
0 & 0 & 0 & 1\\
-1 & 0 & 0 & 0\\
0 & -1 & 0 & 0\\
\end{array}%
\right)=
\lambda\ J \ .
$$

\vskip.1in

To conclude, we give the definition of conformally symplectic systems for continuous dynamical systems.

\begin{definition}
We say that a vector field $X$ is a conformally symplectic flow if, denoting by $L_X$ the Lie
derivative, there exists a function $\lambda:\real^{2n}\to\real$ such that
$$
L_X\Omega = \lambda \Omega\ .
$$
\end{definition}

In analogy to the definition of conformally symplectic maps, we remark that the time $t$-flow $\Phi_t$ satisfies the relation
$$
(\Phi_t)^*\Omega=e^{\lambda t}\Omega\ .
$$

\subsection{Diophantine vectors for maps and flows}\label{sec:DV}

In this Section we give the definition of Diophantine vectors for maps and flows and we
briefly recall the main properties of Diophantine vectors. We start by giving the definition
for maps.

\begin{definition}
We say that the vector $\omega\in\real^n$ satisfies the Diophantine condition, if for
a constant $C>0$ and an exponent $\tau>0$, one has
$$
\left|{{{\underline \omega}\cdot {\underline q}}\over {2\pi}}-p\right|^{-1}\
\leq\ C |q|^{\tau}\ ,\qquad p\in \integer\ ,\quad q\in \integer^n\backslash\{0\}\ .
$$
\end{definition}

\vskip.1in

In the case of flows we have the following definition.

\begin{definition}\label{def:Diophantine}
We say that the vector $\omega\in\real^n$ satisfies the Diophantine condition, if for
a Diophantine constant $C>0$ and a Diophantine exponent $\tau>0$, one has:
$$
|{\underline \omega}\cdot {\underline k}|^{-1}\ \leq\ C
|{\underline k}|^{\tau}\ ,\qquad {\underline k}\in \integer^n\backslash\{0\}\ .
$$
\end{definition}

\vskip.1in

We conclude this Section by listing below some important properties of Diophantine vectors.

$(i)$ Let us denote by $\D(C,\tau)$ the set of Diophantine vectors
satisfying Definition~\ref{def:Diophantine}.
Then, the size of the set of Diophantine vectors $\D(C,\tau)$ increases as $C$ or
$\tau$ increases.
The set of  vectors that satisfy this condition for some $C,\tau$.
is of full
Lebesgue measure in $\real^n$.

$(ii)$ There are no Diophantine vectors in $\real^n$ with $\tau<n-1$.

$(iii)$ The set of Diophantine vectors with $\tau=n-1$ in $\real^n$ has zero Lebesgue measure,
but it is everywhere dense.

$(iv)$ For $\tau>n-1$, almost every vector in $\real^n$ is
$\tau$-Diophantine, namely the complement has zero Lebesgue
measure, although it is everywhere dense.

\section{Invariant tori and KAM theory for conformally symplectic systems}\label{sec:tori}

In this Section we provide the definition of KAM (rotational) invariant tori (for maps and flows)
(see Section~\ref{sec:invtori}); the statement of the KAM theorem for conformally symplectic maps
is given in Section~\ref{sec:KAMtheorem}, whose proof is briefly recalled in Section~\ref{sec:sketch}.
The proof can be translated into a very efficient KAM algorithm (see \cite{CallejaCL11}), which is at
the basis of different results: the derivation of numerical methods to compute the breakdown threshold (Section~\ref{sec:breakdown}),
the investigation of the breakdown mechanism (Section~\ref{sec:collision}),
the implementations to specific models (see Section~\ref{sec:applications}).

\subsection{Invariant KAM tori}\label{sec:invtori}

We start by giving the definition of \sl conditionally periodic \rm and \sl quasi-periodic \rm motions.

\begin{definition}
A conditionally periodic motion is represented by a function $t\mapsto f(\omega_1 t,\ldots,\omega_n t)$, where
$f(x_1,\ldots, x_n)$ is periodic in all variables; the vector
${\underline \omega}=(\omega_1,\ldots,\omega_n)$ is called frequency.

A quasi-periodic motion is a conditionally periodic motion with incommensurable
frequencies.
\end{definition}

\vskip.1in

Next we give the following definition of \sl invariant torus. \rm

\begin{definition}
An invariant torus is an invariant manifold diffeomorphic to the standard torus $\torus^n$.
\end{definition}

We remark that any trajectory on an invariant torus carrying quasi-periodic motions is dense on the torus.
We conclude by giving the definition of (rotational) KAM torus for maps and flows.
This definition is based on the invariance equation \equ{inv} below, whose solution will be
the centerpiece of the KAM theorem presented in Section~\ref{sec:KAMtheorem}.



\vskip.1in

\begin{definition}\label{def:KAMcurve}
Let ${\cal M}\subseteq\real^n\times\torus^n$ be a symplectic manifold and let $f:{\cal M} \rightarrow {\cal M}$
be a symplectic map.
A KAM torus with frequency ${\underline \omega}\in \D(C,\tau)$ is an $n$--dimensional
invariant torus described parametrically by an embedding $K:\torus^n \rightarrow {\cal M}$, which is the
solutions of the invariance equation:
\beq{inv}
f\circ K({\underline \theta})=K({\underline \theta}+{\underline \omega})\ .
\eeq
For a family $f_\mu$ of conformally symplectic diffeomorphisms depending on a real parameter $\mu$,
a KAM attractor with frequency ${\underline \omega}\in\D(C,\tau)$ is an $n$--dimensional
invariant torus described parametrically by an embedding $K:\torus^n \rightarrow {\cal M}$ and
a drift $\mu$, which are the solutions of the invariance equation:
\beq{invCS}
f_{\mu}\circ K({\underline \theta})=K({\underline \theta}+{\underline \omega})\ .
\eeq
For conformally symplectic vector fields $X_\mu$, the invariance equation is given by
$$
X_{\mu}\circ K({\underline \theta})=({\underline \omega}\cdot \partial_{\underline \theta})\ K({\underline \theta})\ .
$$
\end{definition}

We remark that for symplectic systems the invariance equation \equ{inv} contains as only
unknown the embedding $K$, while for conformally symplectic systems the invariance equation
\equ{invCS} contains as unknowns both the embedding $K$ and the drift term $\mu$.

A graphical representation of the invariance equation \equ{inv} is given in Figure~\ref{fig:inv}.

\begin{figure}
\includegraphics[width=11cm,height=8cm]{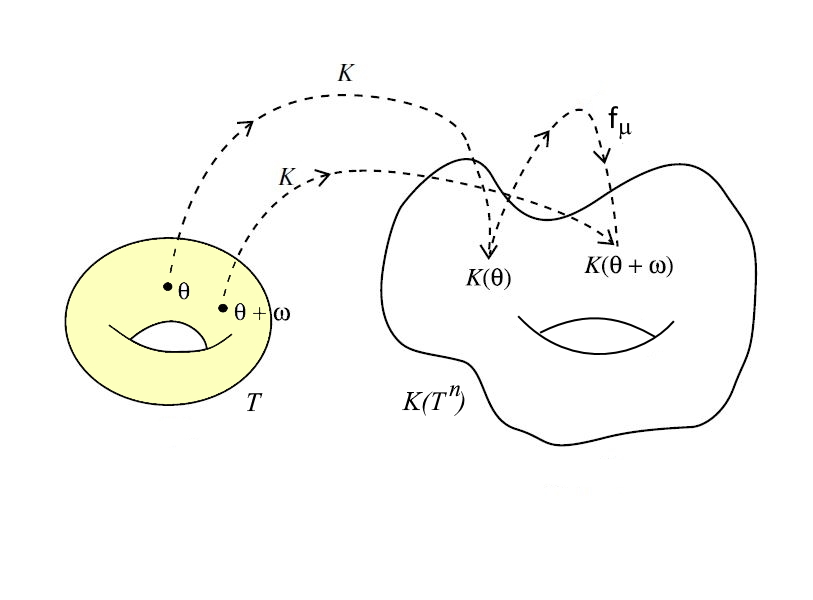}
\vglue-2cm
\caption{Geometric interpretation of the invariance equation $f_\mu\circ
K({\underline \theta})=K({\underline \theta}+{\underline \omega})$ in the unknowns $K$, $\mu$.}\label{fig:inv}
\end{figure}

Although the theory that will be presented in the next Sections apply both to maps and flows,
for simplicity of exposition we will limit to the presentation of KAM theory for maps. We refer
to \cite{CallejaCL11} for the details concerning continuous systems.

\subsection{Conformally symplectic KAM theorem}\label{sec:KAMtheorem}

We will try to answer a specific question, which is formulated below, by means of a suitable statement of
the KAM theorem; the question is motivated by many applications in several models of Celestial Mechanics, which
are often described by nearly-integrable systems. This is why we set the following question in the framework
of nearly-integrable systems, although the formulation of KAM theory does not need that the system is close to
integrable (compare with \cite{LlaveGJV05}).

\vskip.1in

Assume that a given integrable dynamical system admits an invariant torus run by a quasi-periodic motion with frequency $\underline{\omega}$
(e.g., think at Kepler's 2-body problem). Consider a perturbation of the integrable system (e.g., the restricted 3-body problem,
which is described by the 2-body problem with a perturbation proportional to the primaries' mass ratio).
The main question that we want to raise in the framework of KAM theory for nearly-integrable systems is the following:
does the perturbed system still admits an invariant torus run by a quasi-periodic motion
with the same frequency as the unperturbed system? The answer is given by celebrated KAM theory
(\cite{Kolmogorov54,Arnold63a,Moser62}), which can be implemented under very general assumptions, precisely
a non-degeneracy condition of the unperturbed system and a Diophantine condition on the frequency.

We remark that invariant tori are \sl Lagrangian: \rm if $f$ is a symplectic map and
$K$ satisfies the invariance equation \equ{inv}, then
\begin{equation}
K^* \Omega  =0\ .
\label{lagrangian}
\end{equation}
The same holds for a conformally symplectic map $f_\mu$, when $|\lambda| \ne 1$ and $K$ satisfies
the invariance equation \equ{invCS}.
If $f$ is symplectic and $\underline{\omega}$ is irrational, then the torus is Lagrangian, i.e. with maximal
dimension and isotropic (namely, the symplectic form restricted on the manifold is zero,
which implies that each tangent space is an isotropic subspace of the ambient manifold's tangent space).

\vskip.1in

Next step is to consider a nearly-integrable dynamical system affected by a dissipative force, so that the overall
system is conformally symplectic (an example is given by the spin-orbit problem with tidal torque). We assume that
the integrable symplectic system admits an invariant torus with Diophantine frequency; the question becomes
whether the non-integrable system with dissipation still admits, for suitable values of the drift parameter,
an invariant attractor run by a quasi-periodic motion with the same frequency of the unperturbed system.
The answer is given by the KAM theorem for conformally symplectic systems as given by Theorem~\ref{KAMtheorem}
(see \cite{CallejaCL11}).

\vskip.1in

Since we will be interested to give explicit estimates in specific model problems, we introduce the following norms
for analytic and differentiable functions.

\begin{definition}
\sl Analytic norm. \rm Given $\rho >0$, we define the complex extension of the torus, say $\torus_\rho^n$, as the set
$$
\torus_\rho^n = \{ {\underline \theta}\in \complex^n/(2\pi\integer)^n:\ {\rm Re}({\underline \theta})\in\torus^n,\
|{\rm Im}(\theta_j)| \le \rho\ ,\ \ j=1,...,n \}\ ;
$$
we denote by ${\cal A}_{\rho}$ the set of analytic functions in $Int(\torus_\rho^n)$ with the norm
$$
\|f\|_{\rho} = \sup_{{\underline \theta}\in\torus_\rho^n}\ |f({\underline \theta})|\ .
$$
\sl Sobolev norm. \rm For a function $f=f({\underline z})$ expanded in Fourier series as $f({\underline z}) =
\sum_{{\underline k}\in{\integer}^n} \widehat{f_{\underline k}}\ e^{2\pi i{\underline k}\cdot {\underline z}}$
for an integer $m>0$, we define the space $H^m$ as
$$
H^m = \Big\{ f:\torus^n\to \complex \ :\ \| f\|_{m}^2 \equiv
\sum_{{\underline k}\in{\integer}^n} |\,\widehat{f_{\underline k}}\,|^2 (1+|{\underline k}|^2)^m <
\infty\Big\}\ . \label{sobolevnorm}
$$
\end{definition}

Borrowing the statement from \cite{CallejaCL11}, we give below the formulation of the KAM theorem for
conformally symplectic systems (see \cite{LlaveGJV05} for the statement for symplectic systems).
We give the statement for maps, although the results can be formulated also for
systems with continuous time (flows).
Indeed in \cite{CallejaCL11} one can find
a construction that shows that the
results for maps imply the results for flows as well as  direct
proof of the results for flows.

\begin{theorem}\label{KAMtheorem}
Let ${\underline \omega}\in \D(C,\tau)$, $f_\mu:\real^n\times\torus^n\rightarrow \real^n\times\torus^n$ be a
conformally symplectic diffeomorphism, and let $(K,\mu)$ be an approximate solution of the invariance
equation \equ{invCS} with error term $E$:
$$
f_{\mu}\circ K({\underline \theta})-K({\underline \theta}+{\underline \omega})=E({\underline \theta})\ .
$$
Let $N$ be the quantity
\beq{N}
N({\underline \theta}) = (DK({\underline \theta})^T DK({\underline \theta}))^{-1}
\eeq
and let $M({\underline \theta})$ be the $2n\times 2n$ matrix defined by
$$
M({\underline \theta}) = [ DK({\underline \theta})\ |\  J(K({\underline \theta}))^{-1}\ DK({\underline \theta}) N({\underline \theta})]\ .
$$
Let $P({\underline \theta})$ be defined as
$$
P(\theta)\equiv DK(\theta)N(\theta)\ ;
$$
let $A(\theta) \equiv \lambda\,\Id$ and let $S({\underline \theta})$ be
\beq{S}
S(\theta) \equiv P(\theta+\omega)^T Df_\mu \circ K(\theta) J^{-1}\circ K(\theta)P(\theta)
- N(\theta+\omega)^T \gamma(\theta + \omega) N(\theta+\omega) A(\theta)
\eeq
with
$$
\gamma(\theta) \equiv DK(\theta)^T J^{-1} \circ K(\theta)
DK(\theta)\ .
$$
Assume that the following non--degeneracy condition is satisfied:
\beq{ND}
\det
\left(
\begin{array}{cc}
  \langle S\rangle & {\langle {SB^0}}\rangle +\langle {\widetilde A_1}\rangle  \\
  (\lambda-1){\rm Id} & \langle {\widetilde A_2}\rangle  \\
 \end{array}%
\right) \ne 0
\eeq
with $\widetilde A_1$, $\widetilde A_2$ the first and second $n$ columns
of $\tilde A=M^{-1}({\underline \theta}+{\underline \omega}) D_{\mu} f_{\mu} \circ K$,
$B^0=B-\langle B\rangle$ is the solution of $\lambda B^0({\underline \theta})-B^0({\underline \theta}+{\underline \omega})=-(\widetilde A_2)^0({\underline \theta})$.

For $\rho>0$, let $0<\delta<{\rho\over 2}$; if the solution is sufficiently approximate, namely
$$
\|E\|_\rho\leq C_3\ C^{-4}\ \delta^{4\tau}
$$
for a suitable constant $C_3>0$, then there exists an exact solution $(K_e,\mu_e)$, such that
$$
\|K_e-K\|_{\rho-2\delta}\leq C_4\ C^2\ \delta^{-2\tau}\ \|E\|_{\rho}\ ,\quad
|\mu_e-\mu|\leq C_5\ \|E\|_{\rho}
$$
with suitable constants $C_4,C_5>0$.
\end{theorem}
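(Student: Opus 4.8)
The plan is to construct the true solution by a quasi-Newton iteration whose linearized step is solved explicitly through the \emph{automatic reducibility} afforded by the frame $M$. First I would linearize the invariance equation \equ{invCS} at the approximate pair $(K,\mu)$: writing the corrected solution as $(K+\Delta K,\mu+\Delta\mu)$, the first-order equation is
\[
Df_\mu\circ K(\theta)\,\Delta K(\theta)-\Delta K(\theta+\omega)+D_\mu f_\mu\circ K(\theta)\,\Delta\mu=-E(\theta)\ .
\]
The geometric heart of the argument is to seek the correction in the form $\Delta K(\theta)=M(\theta)\,W(\theta)$ with $M=[\,DK\mid J^{-1}DK\,N\,]$. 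Using the conformally symplectic identity $Df^{T}J\,Df=\lambda J$ together with the (approximate) Lagrangian property $K^{*}\Omega=0$, one shows that $M$ conjugates $Df_\mu\circ K$, up to an error controlled by $\|E\|$, to block upper-triangular form with upper-left block $\Id$, lower-right block $A(\theta)=\lambda\,\Id$, and upper-right block $S(\theta)$. That the lower-right block is exactly $\lambda\,\Id$ is the signature of the conformally symplectic structure, and it is precisely what forces the drift $\mu$ to take over the role played by initial conditions in the symplectic case.

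Setting $W=(W_1,W_2)$, $\widetilde E=M(\theta+\omega)^{-1}E$ and $\tilde A=M(\theta+\omega)^{-1}D_\mu f_\mu\circ K$ (with blocks $\widetilde A_1,\widetilde A_2$), the transformed system splits into two cohomology equations, to be solved in order. The lower block
\[
\lambda W_2(\theta)-W_2(\theta+\omega)=-\widetilde A_2(\theta)\,\Delta\mu-\widetilde E_2(\theta)
\]
carries \emph{no} small divisors: since $|\lambda|\ne1$ one has $|\lambda-e^{ik\cdot\omega}|\ge\bigl|\,|\lambda|-1\,\bigr|>0$ for every $k$, so it is solved termwise in Fourier with a uniform bound. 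Its zero-average part gives $W_2^{0}=B^{0}\,\Delta\mu+O(\widetilde E_2)$, where $B^{0}$ is exactly the solution of $\lambda B^{0}(\theta)-B^{0}(\theta+\omega)=-(\widetilde A_2)^{0}(\theta)$ introduced in the statement, while its average yields $(\lambda-1)\langle W_2\rangle+\langle\widetilde A_2\rangle\,\Delta\mu=-\langle\widetilde E_2\rangle$. The upper block
\[
W_1(\theta)-W_1(\theta+\omega)=-S(\theta)\,W_2(\theta)-\widetilde A_1(\theta)\,\Delta\mu-\widetilde E_1(\theta)
\]
is the genuine small-divisor equation; here $\omega\in\D(C,\tau)$ gives solvability of the nonzero modes with the quantitative loss $\delta^{-\tau}$, at the price of the usual constraint that the right-hand side have vanishing average. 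Inserting $\langle S\,W_2\rangle=\langle S\rangle\langle W_2\rangle+\langle S B^{0}\rangle\,\Delta\mu+\cdots$ turns this constraint into $\langle S\rangle\langle W_2\rangle+(\langle S B^{0}\rangle+\langle\widetilde A_1\rangle)\,\Delta\mu=-\langle\widetilde E_1\rangle+\cdots$. These two averaged relations constitute exactly the linear system whose coefficient matrix appears in the non-degeneracy condition \equ{ND}; its invertibility lets me solve simultaneously for $\langle W_2\rangle$ and $\Delta\mu$. After that $W_2$ is fully determined, $W_1$ is determined up to its free average (fixed by a normalization that reflects the freedom of translating the torus), and $\Delta K=MW$ is recovered.

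With one corrective step in hand, the remainder is the standard hard-analysis machinery of KAM, which I would carry out on a nested family of analytic domains. Each cohomology solve is estimated by Cauchy and small-divisor bounds on a strip shrunk by $\delta$, and one checks that the leftover error is quadratic, of the form $\|E'\|_{\rho-\delta}\le\kappa\,C^{2}\,\delta^{-2\tau}\,\|E\|_{\rho}^{2}$ for some constant $\kappa$ (the two small-divisor solves account for the $\delta^{-2\tau}$). Iterating with geometrically decreasing domain losses whose sum stays below $\rho$, the smallness hypothesis $\|E\|_\rho\le C_3C^{-4}\delta^{4\tau}$ is exactly what makes the quadratic scheme converge; summing the telescoping corrections produces the limit $(K_e,\mu_e)$ together with the stated bounds on $\|K_e-K\|_{\rho-2\delta}$ and $|\mu_e-\mu|$. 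I expect the main obstacle to be the approximate reducibility itself: one must prove the algebraic-geometric identities that make the explicit frame $M$ triangularize $Df_\mu$ with diagonal blocks \emph{exactly} $\Id$ and $\lambda\,\Id$, and bound the resulting discrepancy by $\|E\|$. This rests on showing that an approximate solution is approximately Lagrangian, i.e. that $\|K^{*}\Omega\|=O(\|E\|)$, a property that must then be shown to persist along the iteration.
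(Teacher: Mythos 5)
Your proposal follows essentially the same route as the paper: the automatic-reducibility frame $M=[\,DK\mid J^{-1}DK\,N\,]$ triangularizing $Df_\mu\circ K$ up to an error controlled by $\|E\|$ (the paper's Lemma on the remainder $R$), the correction ansatz $\Delta K=MW$, the splitting into the non-resonant $\lambda$-cohomology equation for $W_2$ and the small-divisor equation for $W_1$, the averaged $2\times2$ block system yielding exactly the non-degeneracy matrix \equ{ND}, and the quadratic Nash--Moser iteration on shrinking analytic strips. You also correctly identify the key technical point (approximate Lagrangianity $\|K^{*}\Omega\|=O(\|E\|)$ and its persistence along the iteration), so no gap to report.
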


\begin{remark}
It is useful to make a remark on the non--degeneracy condition \equ{ND}, when applied to the conservative
and dissipative standard maps (\equ{SM1}, \equ{DSM}).
For the conservative standard map, the non--degeneracy condition is typically the so-called \sl twist \rm condition,
which can be written as
\beq{ND1}
{{\partial x'}\over {\partial y}}\not=0\ ,
\eeq
implying that the lift of the map transforms any vertical line always on the same side.

Instead, for the dissipative standard map, that we modify adding a generic dependence on the drift
through a function $p=p(\mu)$, say
\beqano
y'&=&\lambda y+p(\mu)+\varepsilon\  V(x)\nonumber\\
x'&=&x+y'\ ,
\eeqano
then the non--degeneracy condition involves the twist condition and
a non--degeneracy condition with respect to to the parameters, namely:
\beq{ND2}
{{\partial x'}\over {\partial y}}\not=0\ ,\qquad\qquad {{dp(\mu)}\over {d\mu}}\not=0\ .
\eeq
We remark, however,
that \equ{ND1} and \equ{ND2} involve global properties of the system,
while \equ{ND} is a condition involving just the approximate solution,
so that \equ{ND} may be applied in situations where \equ{ND}, \equ{ND2}
fail.
\end{remark}

The proof of Theorem~\ref{KAMtheorem} is given in \cite{CallejaCL11} through the
a-posteriori approach developed in \cite{LlaveGJV05} and making use of an adjustment of parameters
(see \cite{Moser67,BroerHS96b}): assume we can find an approximate
solution $(K,\mu)$ of the invariance equation, satisfying a
non-degeneracy condition, then we can find a true solution $(K_e,\mu_e)$ close to $(K,\mu)$, such that
$\|K_e-K\|$, $|\mu_e-\mu|$ is small. A sketch of the proof of Theorem~\ref{KAMtheorem}
is presented in Section~\ref{sec:sketch}.

\vskip.1in

We conclude this Section by remarking that the a-posteriori approach presents several advantages,
among which:

\begin{itemize}
  \item
    $(i)$ it can be developed in any coordinate frame and not necessarily in action-angle variables. In many practical problems, the action-angle
    variables are difficult to compute and involve complex singularities.

    Of course, once we have the existence of the torus, we can
    construct action angle variables. Hence, compared to more standard
    results, the accomplishment of the method is that the existence
    of action variables and the quasi-integrablility is
    moved from the hypothesis to the conclusions. This is
    useful in practice  since the hypothesis is  hard to verify in applications.

    Of course, moving conclusions to hypothesis without regards on
    how to check them, one can get the same conclusions. In that
    respect, adding the hypothesis of the existence of the torus
    would get a theorem with the same applicability and a simpler proof.

        \item
      $(ii)$ The system is not assumed to be nearly integrable.
      \item
$(iii)$ Instead of constructing a sequence of coordinate transformations
on shrinking domains as in the perturbation approach, one computes suitable corrections
to the embedding and the drift.

The computation of the embeddings requires
to work only with variables of $n$ dimensions whereas transformation theory
requires to work with variables in $2n$ dimensions. The complexity
of representing functions grows exponentially -- with a large exponent --
on the dimension. The composition
of two functions has rather awkward analytic and numerical properties.

\item
  $(iv)$ The non-degeneracy assumptions
are not global properties of the map, but are rather properties
of the approximate solution considered.
\item
  $(v)$ One does
not need to justify how the approximate solution was obtained.
In particular, one can take as approximate solution the
result of numerical calculations or a formal expansion.

Verifying the hypothesis in a numerical approximation is
just a finite number of calculations.  Even if this number is
too large to do by hand, it could be moderate to do with a computer
(e.g., a few hours in a common laptop).  If these can be
done taking care of roundoff and truncation errors, this may lead
to a computer assisted proof.

One can also verify the hypothesis easily in a numerical expansion.

Note that, in these verifications it is crucial
as noted in $(i)$ to move the difficult to verify facts from
the hypothesis that need to be checked to the conclusions.
\end{itemize}

\subsection{A sketch of the proof of the KAM theorem}\label{sec:sketch}

The proof of Theorem~\ref{KAMtheorem} can be summarized as composed by five main steps:

\begin{itemize}
\item Step 1: starting from an approximate solution, write the linearization of the invariance equation.

\item Step 2: by a Newton's method find a quadratically smaller approximation.

\item Step 3: under a non--degeneracy condition, solve the cohomological equation that allows to find the new approximation.

\item Step 4: iterate the procedure and show its convergence.

\item Step 5: prove that the solution is locally unique.
\end{itemize}

\vskip.1in

We briefly describe such steps as follows.

\subsubsection{Step 1: approximate solution and linearization.}

Let $(K,\mu)$ be an approximate solution satisfying
\beq{invE}
f_\mu\circ K({\underline \theta})-K({\underline \theta}+{\underline \omega})=E({\underline \theta})\ .
\eeq
Using the Lagrangian property $K^*\Omega=0$ written in coordinates, namely
$$
DK^T({\underline \theta})\ J\circ K({\underline \theta})\ DK({\underline \theta})=0\ ,
$$
we get that the tangent space is given by
\beq{tangent}
Range \Big(DK({\underline \theta})\Big) \oplus Range \Big(V({\underline \theta})\Big)
\eeq
with $N$ as in \equ{N} and
$$
V({\underline \theta})=J^{-1}\circ K({\underline \theta})\ DK({\underline \theta}) N({\underline \theta})\ .
$$
Define the quantity
\beq{Mdef}
M({\underline \theta})=[ DK({\underline \theta})\ |\  V({\underline \theta})]\ .
\eeq
Then, we have the following result.

\begin{lemma}\label{lem:R}
Up to a remainder $R$, we have the following relation:
$$
Df_\mu \circ K({\underline \theta})\ M({\underline \theta}) = M({\underline \theta} +{\underline \omega})
\left(%
\begin{array}{cc}
  \Id & S({\underline \theta})\\
  0 & \lambda \Id \\
\end{array}%
\right)
+R({\underline \theta})\ .
$$
\end{lemma}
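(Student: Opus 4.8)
The plan is to verify the claimed block-triangular relation column by column, reading off how $Df_\mu\circ K(\theta)$ acts on the two blocks of $M(\theta)=[DK(\theta)\mid V(\theta)]$ from \equ{Mdef} and identifying the discrepancies with a single remainder $R(\theta)$ controlled by $E$ and its derivative.

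For the tangential block I would simply differentiate the approximate invariance equation \equ{invE} in $\theta$, which yields
\beqno
Df_\mu\circ K(\theta)\,DK(\theta)=DK(\theta+\omega)+DE(\theta)\ .
\eeqno
Thus the first $n$ columns are sent to $DK(\theta+\omega)=M(\theta+\omega)\left(\begin{array}{c}\Id\\0\end{array}\right)$ up to the remainder $DE(\theta)$, reproducing the first column of the triangular factor.

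The normal block $V(\theta)=J^{-1}\circ K(\theta)\,DK(\theta)N(\theta)$ carries the content. Starting from the Lagrangian identity $DK^T\,J\circ K\,DK=0$, a direct computation using the antisymmetry of $J$ gives $DK^T\,J\circ K\,V=\Id$ and $V^T\,J\circ K\,V=-N^T\gamma N$, so that
\beqno
M^T\,J\circ K\,M=\left(\begin{array}{cc} 0 & \Id\\ -\Id & -N^T\gamma N\end{array}\right)\ ,
\eeqno
which is invertible and supplies the projection $M^{-1}=(M^T J M)^{-1}M^T J$ onto the moving frame. Writing $Df_\mu\circ K(\theta)\,V(\theta)=DK(\theta+\omega)\,a+V(\theta+\omega)\,b$ and pairing with $M(\theta+\omega)^T\,J\circ K(\theta+\omega)$, the block identity above shows $b=DK(\theta+\omega)^T\,J\circ K(\theta+\omega)\,Df_\mu\circ K(\theta)\,V(\theta)$ and $a=-V(\theta+\omega)^T\,J\circ K(\theta+\omega)\,Df_\mu\circ K(\theta)\,V(\theta)-N(\theta+\omega)^T\gamma(\theta+\omega)N(\theta+\omega)\,b$. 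To evaluate these I would invoke conformal symplecticity \equ{CSdef} at $K(\theta)$, namely $Df_\mu^T\,(J\circ f_\mu\circ K(\theta))\,Df_\mu=\lambda\,J\circ K(\theta)$; since $f_\mu\circ K(\theta)=K(\theta+\omega)+E(\theta)$, replacing the base point $f_\mu\circ K(\theta)$ by $K(\theta+\omega)$ costs only a term of order $E$. Combined with the differentiated relation above this gives $b=\lambda\,\Id$, while the surviving part of $a$ collapses, after using antisymmetry of $J$ and symmetry of $N$, to $P(\theta+\omega)^T\,Df_\mu\circ K(\theta)\,J^{-1}\circ K(\theta)\,P(\theta)$; together with the remaining $-\lambda\,N^T\gamma N$ term this is precisely $S(\theta)$ of \equ{S} with $A=\lambda\,\Id$. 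All the errors made in these substitutions are gathered into $R(\theta)$.

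The step I expect to be the main obstacle is the bookkeeping of $R$: each exact relation I use holds only approximately for $(K,\mu)$ -- the differentiated invariance equation carries $DE$, conformal symplecticity is evaluated at $K(\theta+\omega)+E$ rather than at the true image, and even the Lagrangian identity $K^*\Omega=0$ is only approximate for a non-solution. One must check that the inversion of $M^T J M$ and these base-point shifts do not amplify the order, so that $R$ stays of size comparable to $\|E\|$ together with $\|DE\|$ at shifted arguments; that control is exactly what the subsequent Newton step in Step~2 will consume.
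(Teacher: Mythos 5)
Your proposal is correct and follows essentially the same route as the paper: differentiate the approximate invariance equation for the tangential block, then use the (approximate) Lagrangian character of $K$, the frame $M=[DK\mid V]$ with $M^TJM=\left(\begin{smallmatrix}0&\Id\\-\Id&-N^T\gamma N\end{smallmatrix}\right)$, and conformal symplecticity \equ{CSdef} to identify the coefficients $\lambda\,\Id$ and $S$ in the normal block. The paper's own proof compresses this second computation into the phrase ``due to \equ{tangent}''; your version simply supplies the automatic-reducibility calculation (from \cite{LlaveGJV05,CallejaCL11}) that this phrase refers to, together with a correct accounting of where the remainder $R$ comes from.
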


\begin{proof}
Recalling the definition of $M$ in \equ{Mdef}, we have that taking the derivative of
$$
f_\mu\circ K({\underline \theta})=K({\underline \theta}+{\underline \omega})+E({\underline \theta})\ ,
$$
one obtains the relation
$$
Df_\mu \circ K({\underline \theta})\ DK({\underline \theta})= DK({\underline \theta}+{\underline \omega})+DE({\underline \theta})\ .
$$
Due to \equ{tangent}, one obtains:
$$
Df_\mu\circ K({\underline \theta})\ V({\underline \theta})=D K({\underline \theta}+{\underline \omega})\
S({\underline \theta})+V({\underline \theta}+{\underline \omega})\ \lambda\ {\Id}+ h.o.t.
$$
with $S$ as in \equ{S}.
\end{proof}

\subsubsection{Step 2: determine a new approximation.}

Let the new approximation $(K',\mu')$ be defined as $K'=K+M W$, $\mu'=\mu+\sigma$. Let $E'$ be the
error associated to $(K',\mu')$:
\beq{APPR-INVp}
f_{\mu'}\circ K'({\underline \theta})-K'({\underline \theta}+{\underline \omega}) =E'({\underline \theta})\ .
\eeq
Expanding \equ{APPR-INVp} in Taylor series, we get
\beqano
&&f_\mu\circ K({\underline \theta})+Df_\mu\circ K({\underline \theta})\ M({\underline \theta})W({\underline \theta})+D_\mu  f_\mu \circ K({\underline \theta})\sigma\nonumber\\
&&\qquad -K({\underline \theta}+{\underline \omega})- M({\underline \theta}+{\underline \omega})\
W({\underline \theta}+{\underline \omega})+h.o.t.=E'({\underline \theta})\ .
\eeqano
Recalling \equ{invE}, the new error $E'$ is quadratically smaller provided the following relation holds:
\beq{quad}
Df_\mu \circ K({\underline \theta})\ M({\underline \theta})W({\underline \theta}) - M({\underline \theta}+
{\underline \omega})\ W({\underline \theta}+{\underline \omega}) + D_\mu  f_\mu \circ K({\underline \theta})\sigma =
- E({\underline \theta})\ .
\eeq
Combining \equ{quad} and Lemma~\ref{lem:R}, we have:
$$
Df_\mu \circ K({\underline \theta})\ M({\underline \theta}) = M({\underline \theta} +{\underline \omega})
\left(%
\begin{array}{cc}
  \Id & S({\underline \theta}) \\
  0 & \lambda \Id \\
\end{array}%
\right)+R({\underline \theta})\ .
$$
This allows to get the following equations for $W=(W_1,W_2)$ and $\sigma$
$$
M({\underline \theta} +{\underline \omega})
\left(%
\begin{array}{cc}
  \Id & S({\underline \theta} \\
  0 & \lambda \Id \\
\end{array}%
\right)
\ W({\underline \theta}) - M({\underline \theta}+{\underline \omega})\ W({\underline \theta}+{\underline \omega}) =
- E({\underline \theta})- D_\mu  f_\mu \circ K({\underline \theta})\sigma
$$
that we are going to  make more explicit.
Multiplying by $M({\underline \theta}+{\underline \omega})^{-1}$ and writing $W=(W_1,W_2)$, one gets that the previous equation is equivalent to:
\beq{W12}
\left(%
\begin{array}{cc}
  \Id & S({\underline \theta}) \\
  0 & \lambda \Id \\
\end{array}%
\right)
\left(\begin{array}{c}
  W_1({\underline \theta}) \\
  W_2({\underline \theta}) \\
\end{array}\right)-
\left(\begin{array}{c}
  W_1({\underline \theta}+{\underline \omega}) \\
  W_2({\underline \theta}+{\underline \omega}) \\
\end{array}\right)=
\left(\begin{array}{c}
  -\tilde E_1({\underline \theta})-\tilde A_1({\underline \theta})\sigma \\
  -\tilde E_2({\underline \theta})-\tilde A_2({\underline \theta})\sigma \\
\end{array}\right)
\eeq
with $\tilde E_j({\underline \theta})=-(M({\underline \theta} +{\underline \omega})^{-1}E)_j$,
$\tilde A_j({\underline \theta})=(M({\underline \theta} +{\underline \omega})^{-1}D_\mu f_\mu\circ K)_j$.
Writing \equ{W12} in components, we obtain:
\beqa{B}
W_1({\underline \theta})-W_1({\underline \theta}+{\underline \omega}) &=& -\widetilde
E_1({\underline \theta})-S({\underline \theta}) W_2({\underline \theta})-\widetilde A_1({\underline \theta})\,
\sigma\nonumber\\
\lambda W_2({\underline \theta})-W_2({\underline \theta}+{\underline \omega}) &=& -\widetilde
E_2({\underline \theta})-\widetilde
A_2({\underline \theta})\,\sigma\ .
\eeqa
The cohomological equations \equ{B} allow to find the corrections $W_1$, $W_2$ and $\sigma$, as sketched in the next step.

\subsubsection{Step 3: solve the cohomological equations.}

To determine the new approximation, we need to solve equations \equ{B},
which are equations with constant coefficients for
$W_1$, $W_2$ and $\sigma$ for known $S$, $\widetilde E\equiv(\widetilde E_1,\widetilde E_2)$, $\widetilde
A\equiv[\widetilde A_1|\ \widetilde A_2]$.

The first equation  in \equ{B} is a standard small divisor equation, which can be solved under the Diophantine condition
on the frequency, so to bound the small divisors.

For $|\lambda|\ne1$ and for all real vectors ${\underline \omega}$, it is possible to solve the
second equation in \equ{B} by an elementary contraction mapping argument.

We remark that, using Cauchy estimates for the cohomological equations \equ{B}, we can bound
$\|W_1\|_{\rho-\delta}$ and $\|W_2\|_{\rho-\delta}$ by $\|E\|_\rho$.

To solve the cohomological equations, we proceed as follows.
Take the averages of each equation in \equ{B} and use the
non--degeneracy condition to determine $\langle W_2\rangle$, $\sigma$ by solving the equation
$$
\left(
\begin{array}{cc}
  \langle S\rangle & {\langle {SB^0}}\rangle +\langle {\widetilde A_1}\rangle  \\
  (\lambda-1){\rm Id} & \langle {\widetilde A_2}\rangle  \\
 \end{array}%
\right)
\left(
\begin{array}{cc}
\langle W_2\rangle  \\
\sigma\\
 \end{array}%
\right)=
\left(
\begin{array}{cc}
-\langle S \tilde B^0 \rangle- \langle \widetilde E_1\rangle \\
-\langle \widetilde {E_2}\rangle \\
 \end{array}%
\right)\ ,
$$
where we have split $W_2$ as $W_2=\langle W_2\rangle+B^0+\sigma \tilde B^0$.

\vskip.1in

Next, we need to solve the second equation in \equ{B} for $W_2$, which is an equation of the form
$\lambda W_2({\underline \theta})-W_2({\underline \theta}+{\underline \omega})=Q_2({\underline \theta})$
with $Q_2$ known. Such equation is always solvable for any $|\lambda|\not=1$ by a contraction mapping argument, using
that
$\lambda W_2({\underline \theta})-W_2({\underline \theta}+{\underline \omega}) =\sum_{\underline k} \widehat W_{2,{\underline k}}\,
e^{i{\underline k}\cdot{\underline \theta}}(\lambda-e^{i{\underline k}\cdot {\underline \omega}})$.

\vskip.1in

Finally, we solve the first equation  in \equ{B} for $W_1$, which amounts to solve an equation
of the form $W_1({\underline \theta})-W_1({\underline \theta}+{\underline \omega})=Q_1({\underline \theta})$
with $Q_1$ known. It involves small (zero) divisors, since for ${\underline k}={\underline 0}$ one has
$1-e^{i{\underline k}\cdot {\underline \omega}}=0$. The left hand side of the first equation in \equ{B}
can be expanded as
$$
W_1({\underline \theta})-W_1({\underline \theta}+{\underline \omega}) =\sum_{{\underline k}\in\integer^n
\backslash\{{\underline 0}\}} \widehat W_{1,{\underline k}}\,
e^{i{\underline k}\cdot{\underline \theta}}(1-e^{i{\underline k}\cdot {\underline \omega}})\ .
$$
To get a bound for the solution of \equ{B}, we need the following result.

\begin{proposition}\label{pro:cauchy}
Let $Z=Z({\underline \theta})$ be a function
with zero average and such that $Z\in{\cal A}_\rho$ or $Z\in H^m$.
Let ${\underline \omega}\in D(C,\tau)$. Assume that the function $U=U({\underline \theta})$ satisfies
$$
\lambda U({\underline \theta})-U({\underline \theta}+{\underline \omega})=Z({\underline \theta})\ .
$$
Then, if $\lambda\not=1$, $|\lambda|\in[A,A^{-1}]$ for $0<A<1$, we have that
$$
\|U({\underline \theta})\|_{\rho-\delta}\leq C\delta^{-\tau} \|Z\|_\rho\ .
$$
\end{proposition}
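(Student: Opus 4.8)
The plan is to diagonalize the equation $\lambda U(\theta)-U(\theta+\omega)=Z(\theta)$ in Fourier space. Writing $Z=\sum_{k\in\integer^n}\hat Z_k\, e^{ik\cdot\theta}$ and $U=\sum_k \hat U_k\, e^{ik\cdot\theta}$ and matching coefficients reduces the functional equation to the scalar relations $(\lambda-e^{ik\cdot\omega})\,\hat U_k=\hat Z_k$, so that $\hat U_k=\hat Z_k/(\lambda-e^{ik\cdot\omega})$. For $k=0$ the divisor is $\lambda-1\neq0$, and the zero-average hypothesis $\hat Z_0=0$ forces $\hat U_0=0$. I would stress that this is exactly why zero average is assumed: otherwise the $k=0$ term would contribute $\hat Z_0/(\lambda-1)$, whose size explodes in the conservative limit $\lambda\to1$ that one wants to approach uniformly. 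For $k\neq0$ the divisors are nonzero by the estimate below, so the solution exists and is unique.

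The crux, and the step I expect to be the main obstacle, is a lower bound on the divisors $\lambda-e^{ik\cdot\omega}$ that is uniform in $\lambda$ up to $\lambda=1$. For the real conformal factor $\lambda$ relevant to the applications I would use the elementary identity $|\lambda-e^{ik\cdot\omega}|^2=(\lambda-1)^2+2\lambda\,(1-\cos(k\cdot\omega))\ge 2\lambda\,(1-\cos(k\cdot\omega))$, together with $1-\cos(k\cdot\omega)=2\sin^2((k\cdot\omega-2\pi p)/2)$ for the nearest $p\in\integer$ and the Diophantine bound $\mathrm{dist}(k\cdot\omega,2\pi\integer)\ge 2\pi/(C|k|^\tau)$ coming from $\omega\in\D(C,\tau)$. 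This yields $|\lambda-e^{ik\cdot\omega}|\ge c\,|k|^{-\tau}$ for $k\neq0$, with $c=c(A,C)>0$ independent of $\lambda\in[A,A^{-1}]$. The essential point is that the Diophantine condition is what secures uniformity across the singular limit $\lambda\to1$: when $|\lambda|$ is merely bounded away from $1$ one has the trivial, divisor-free bound $|\lambda-e^{ik\cdot\omega}|\ge \bigl||\lambda|-1\bigr|$ and no small divisors at all, but that bound degenerates as $\lambda\to1$, whereas the $|k|^{-\tau}$ bound survives.

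It then remains to estimate a Fourier series with the multiplier bound $|\hat U_k|\le c^{-1}|k|^\tau|\hat Z_k|$. In the analytic category I would combine this with the Cauchy decay $|\hat Z_k|\le\|Z\|_\rho\, e^{-|k|\rho}$ and sum on the narrower strip, $\|U\|_{\rho-\delta}\le\sum_k|\hat U_k|\,e^{|k|(\rho-\delta)}\le c^{-1}\|Z\|_\rho\sum_k|k|^\tau e^{-|k|\delta}$; bounding the last sum crudely gives $\delta^{-(\tau+n)}$, and to obtain the sharp exponent $\delta^{-\tau}$ of the Proposition I would invoke Rüssmann's optimal summation (as in \cite{LlaveGJV05}), which avoids the spurious $\delta^{-n}$ coming from counting lattice shells. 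The Sobolev case is purely algebraic and needs no $\delta$: the same multiplier bound gives $\|U\|_{m-\tau}^2=\sum_k|\hat U_k|^2(1+|k|^2)^{m-\tau}\le c^{-2}\sum_k|\hat Z_k|^2(1+|k|^2)^m=c^{-2}\|Z\|_m^2$, i.e.\ a fixed loss of $\tau$ derivatives. Thus the only technically delicate point is the sharp summation; the divisor estimate, though it is the conceptual heart, is elementary once the Diophantine condition is used as above.
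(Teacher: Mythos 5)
Your argument is correct and is essentially the proof that the paper itself defers to the references for (the text only says ``we refer to \cite{CallejaCL11,Russmann76a}''): diagonalize in Fourier space, bound the divisors $\lambda-e^{i{\underline k}\cdot{\underline \omega}}$ from below uniformly in $\lambda$ by combining $(\lambda-1)^2+2\lambda(1-\cos({\underline k}\cdot{\underline \omega}))$ with the Diophantine condition, and then sum with R\"ussmann's optimal estimate to get $\delta^{-\tau}$ rather than the crude $\delta^{-(\tau+n)}$. The only caveats are that you quote rather than reprove the R\"ussmann summation (exactly as the paper does) and that your divisor identity implicitly uses $\lambda>0$; for $\lambda$ near $-1$ the stated hypothesis $|\lambda|\in[A,A^{-1}]$ alone would not suffice, but that is the regime irrelevant to the conformally symplectic applications.
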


\vskip.1in

We refer to \cite{CallejaCL11,Russmann76a} for the proof of Proposition~\ref{pro:cauchy}.

\subsubsection{Step 4: convergence of the iterative step.}

The solution described in Step 3, allows to state that the invariance equation is satisfied with an error quadratically smaller, i.e.
$$
\|E'\|_{\rho-\delta}\leq C_8\delta^{-2\tau}\|E\|_\rho^2\ ,\qquad
\|E'\|_{H^{m-\tau}}\leq C_9\|E\|_{H^m}^2\ .
$$
The procedure at Step 3 can be iterated to get a sequence of approximate solutions,
say $\{K_j,\mu_j\}$. Its convergence is obtained through an abstract implicit function theorem,
alternating the iteration with carefully chosen smoothings operators defined in
a scale of Banach spaces (analytic functions or Sobolev spaces).

\subsubsection{Step 5: local uniqueness.}

Under smallness conditions, one can prove that, if there exist two solutions $(K_a,\mu_a)$, $(K_b,\mu_b)$, then there exists
${\underline \psi}\in\real^n$ such that
$$
K_b({\underline \theta})=K_a({\underline \theta}+{\underline \psi})\qquad {\rm and}\qquad \mu_a=\mu_b\ .
$$
We remark that in the analytic case, the smoothing is obtained by rescaling the size of the strip on which
the analytic functions are defined at each step, given that the domains where they are defined shrink by a given amount.
Then, for the sequence of solutions $\{K_j,\mu_j\}$, one can take the analyticity domain parameters $\rho_h$ and the
shrinking parameters $\delta_h$ as
$$
\rho_0=\rho\ ,\qquad \delta_h={\rho_0\over {2^{h+2}}}\ ,\qquad \rho_{h+1}=\rho_h-\delta_h\ ,\qquad h\geq 0\ .
$$
Given that the error is quadratic, we can write for some $a,b>0$ and a constant $C_E>0$:
$$
\|E(K_{h+1},\mu_{h+1})\|_{\rho_{h+1}}\leq C_E\ \nu^a\delta_h^b\ \|E(K_h,\mu_h)\|_{\rho_h}^2\ .
$$
If the quantity $\varepsilon_0\equiv \|E(K_0,\mu_0)\|_{\rho_0}$ is small enough, then one can prove that
$$
\|K_h-K_0\|_{\rho_h}\leq C_K\varepsilon_0\ , \qquad |\mu_h-\mu_0|\leq C_\mu\varepsilon_0
$$
for some constants $C_K,C_\mu>0$.
A finite number of conditions on parameters and norms will imply the indefinite iterability
of the procedure and its convergence.

\vskip.1in

The a-posteriori approach for conformally symplectic systems has a number of consequences and further developments that
we briefly summarize below, referring to the cited literature for full details:

\begin{itemize}
    \item the method provides an efficient algorithm to determine the breakdown threshold,
very suitable for computer implementations (\cite{CallejaC10});

    \item the a-posteriori method allows to find rigorous  lower estimates of the
      breakdown threshold (\cite{Rana87,FiguerasHL17}). The
      rigorous lower estimates for symplectic
      maps in \cite{Rana87,FiguerasHL17} are very close to
      to  the rigorous upper estimates in \cite{Jungreis91}.
      In \cite{CCL2020} one can find very detailed estimates (they do not
      control completely the round off error, but they control everything else),
      that are comparable with the best numerical estimates computed by other
      methods;

    \item one gets that the local behavior near quasi--periodic solutions is given by a rotation in
the angles and a shrink in the actions (\cite{CallejaCL11b});

 \item the method allows to obtain a partial justification of Greene's criterion
for the computation of the breakdown threshold of invariant attractors (\cite{CCFL14});

 \item one obtains a bootstrap of regularity, which allows to state that all smooth enough tori
are analytic, whenever the map is analytic (\cite{CallejaCL11});

 \item one gets a characterization of the analyticity domains of the quasi--periodic attractors
in the symplectic limit (\cite{CCLdomain});

 \item one can prove the existence of whiskered tori for conformally symplectic systems (\cite{CCLwhiskered}).
\end{itemize}

\vskip.1in

Concerning the first item above, we stress that
the proof given in \cite{CallejaCL11} leads to a very efficient KAM algorithm, which can be
implemented numerically and it is shown to work very close to the boundary of validity (\cite{CCL2020}).
Indeed, all steps of the algorithm involve diagonal operations in the Fourier space and/or diagonal
operations in the real space.
Moreover, if we represent a function in discrete points or
in Fourier space, then we can compute the other functions by applying
the Fast Fourier Transform (FFT). Using $N$ Fourier modes
to discretize the function, then we need $O(N)$ storage and $O(N\log N)$ operations.   Note that all the steps in the algorithm can be implemented in
a few lines in a high level language so the the resulting algorithm
is not very hard to implement ( about 200 lines in {\tt Octave} and
about 2000 lines in C).  Even if the above transcription of the
algorithm works extremely well in near integrable systems, when
approaching the breakdown, one needs to take some standard precautions
(e.g. monitoring the size of the tails of Fourier series).

We also remark that the KAM proof requires a computer to make very long computations, which are
needed to determine, for example, the initial approximate solution or to check the KAM algorithm.
However, the computer introduces rounding-off and propagation errors, which can be controlled
through interval arithmetic for which we refer to the specialized literature
(see, e.g., \cite{Meyer91,MR89d:58095,KochSW96,Haro}).

\section{Breakdown of quasi--periodic tori and quasi--periodic attractors}\label{sec:breakdown}

The analytical estimates which can be obtained through the implementation of the KAM theorem
represent a rigorous lower bound of the breakdown threshold of invariant tori.
In problems with a well-defined physical meaning, one can compare the KAM results with a measure of the parameter(s).
For example in the restricted 3-body problem, one aims to prove the theorem for the true value of the
mass ratio of the primaries. If we consider an asteroid under the gravitational attraction of
Jupiter and the Sun, then the mass ratio amounts to $\varepsilon\simeq 10^{-3}$,
which represents the benchmark that one wants to reach through rigorous KAM estimates.

Model problems like the standard maps do not have a physical reference value; therefore, one needs to apply numerical techniques
that allow to determine the KAM breakdown threshold. Among the others, we mention Greene's technique (\cite{Greene79}),
frequency analysis (\cite{LaskarFC92}), Sobolev's method (\cite{CallejaC10}).

In the next Sections we review two methods for the numerical
computation of the breakdown threshold that have been successfully
applied to the standard map (\cite{Greene79,CallejaL10,CallejaC10}):
one is based on Sobolev's method (Section~\ref{sec:sobolev}) and the
other is based on Greene's method (Section~\ref{sec:greene}).
The problem of breakdown of KAM tori has been studied by many methods.
The paper \cite{CallejaL10} contains a small survery and comparison
of several  different methods, some of which we will not mention here.

\subsection{Sobolev breakdown criterion}\label{sec:sobolev}

To illustrate the method, we focus on the specific examples of the conservative
and dissipative standard maps;
hence we have a two-dimensional discrete system, which can be parametrized by
a one-dimensional variable $\theta\in\torus$. In particular, in the conservative case we write
the invariance equation for $K$ as
$$
f\circ K(\theta)=K(\theta+\omega)\ ,
$$
while in the dissipative case we write the invariance equation for $(K,\mu)$ as
\begin{equation}\label{invariance}
f_{\mu}\circ K(\theta)=K(\theta+\omega)\ .
\end{equation}
As shown rigorously in \cite{CallejaL10} for the conservative case and in \cite{CallejaC10} for
the dissipative case, the continuation method based in
the constructive Newton method can (if given enough computer resources)
reach arbitrarily close to the breakdown. Furthermore,
the breakdown of analytic tori happens if and only if some Sobolev
norm of sufficiently high order blows up.

This rigorous result can, of course, be readily implemented.  Today's
computers, of course, do not have infinite resources, but they
are fairly impressive for people who cut their teeth in a PDP-11
with 16K of RAM. Since the algorithms we describe are based on
computing Fourier series,
one can get readily the Sobolev norms  of
the embedding $K$ and monitor their blow up.

The blow up of
the Sobolev norm, gives a clear indication that the torus
is breaking down. Note that, given the a-posteriori theorem,
and the bootstrap of regularity results, if the norm of
the computed solution is not blowing up, it is a very clear
indication that the torus is there.

\begin{remark}
Something that increases the possible effectiveness
of this method is that it has been found empirically that the
blow up of Sobolev norms is given by power laws whose
exponents are universal. Even if this is mainly an
empirical observation (that needs to be somehow tone down
since \cite{CallejaL10} contains several warnings for some maps),
it can improve dramatically the computation of breakdowns.
Many of these empirical results are organized using
\emph{Renormalization Group  methods} (\cite{Rand92a,Rand92b,Rand92c,McKay82}).
Even if some aspects of renormalization group have
been made rigorous
(\cite{Koch99,Koch04,Koch08,Koch16,Stirnemann93,Stirnemann97}), much more mathematical
work seems to remain.
\end{remark}

We implement the method for the conservative and dissipative standard maps,
computing in Table~\ref{tab:sobolev} the value of $\varepsilon_{crit}$ for
the frequency equal to the golden ratio: $\omega= 2\pi{{\sqrt{5}-1}\over 2}$.
The result in the conservative case is in full agreement with the value which can
be obtained by implementing Greene's method (see \cite{Greene79}).
The values for the dissipative case given in Table~\ref{tab:sobolev}
will be compared in Section~\ref{sec:greene} to those obtained implementing
a version of Greene's method for the dissipative standard map.

\begin{table}[h]
\centering
\begin{tabular}{|c||c|c|c|c|c|}
\hline
Conservative case&  Dissipative case &\\
\hline
$\varepsilon_{crit}$&$\lambda$ & $\varepsilon_{crit}$ \\
\hline
0.9716&0.9 &
0.9721\\
\hline &0.5 &
0.9792 \\
\hline
\end{tabular}
\vskip.1in
\caption{Breakdown values of the golden mean curve obtained implementing
Sobolev's method for the conservative case (left column) and for the dissipative case (right column),
the latter one for two different values of the dissipative parameter.}\label{tab:sobolev}
\end{table}

In Figure~\ref{Bat}, we present the existence domain of the dissipative standard map \equ{DSM} with a two harmonic potential given by
\beqa{twoharmonic}
V(\theta) = \eps_1 \sin(x) +\eps_2 \sin(2 x)\ .
\eeqa
\begin{figure}[h!]
  \centering
  \includegraphics[width=6cm,height=4cm]{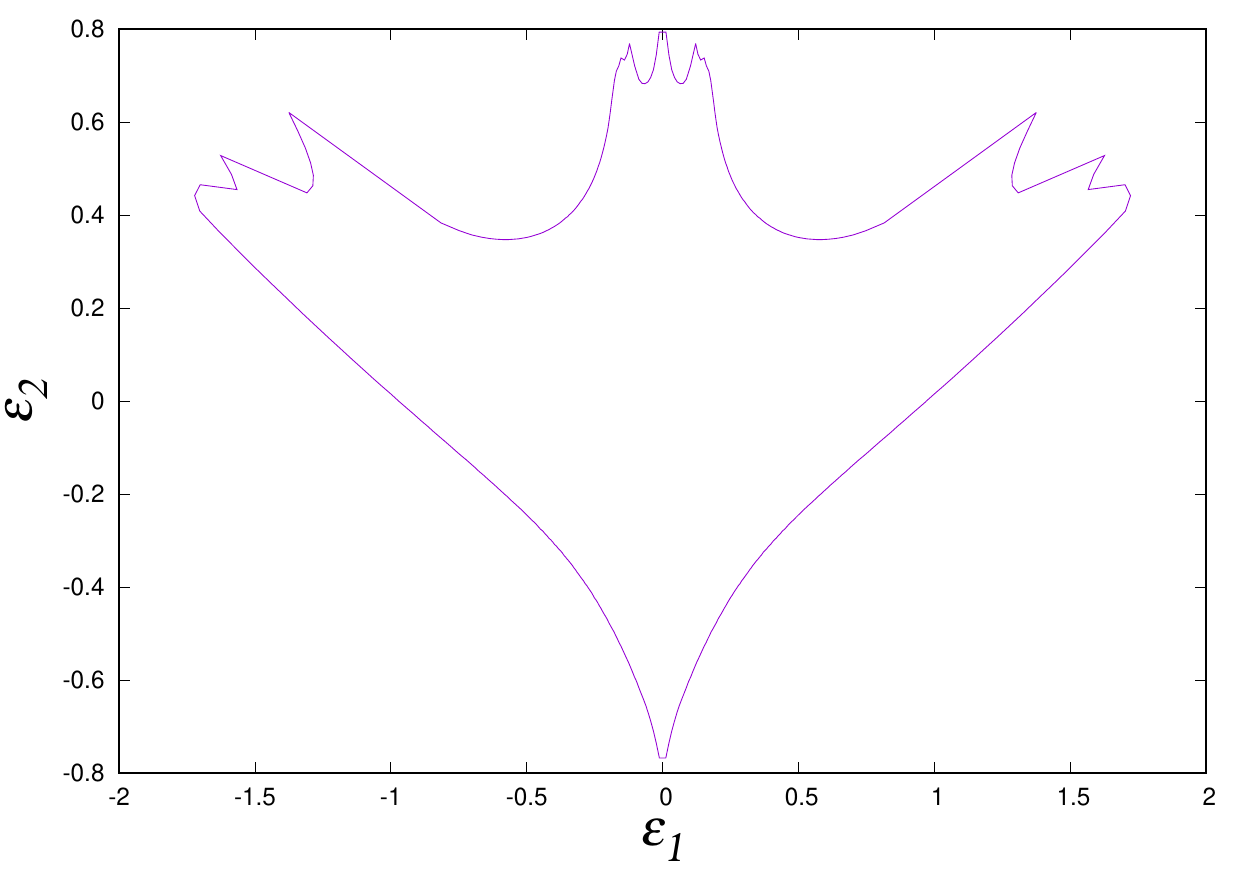}
  \includegraphics[width=6cm,height=4cm]{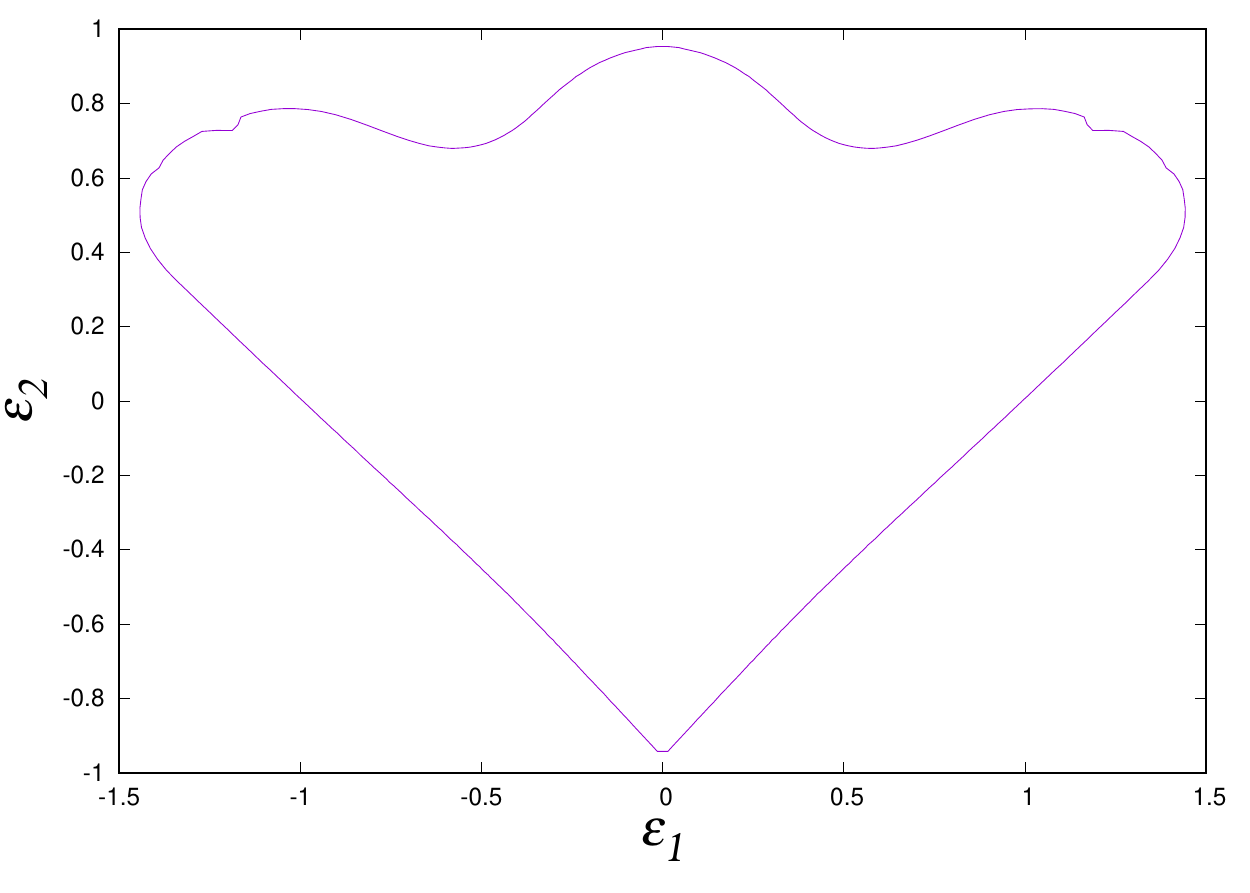}
\caption{Existence domain for invariant circles of the dissipative standard map with
  potential \equ{twoharmonic}. Left: $\lambda=0.9$. Right: $\lambda=0.1$.}\label{Bat}
\end{figure}

We call attention to the fact that this region contains parts with smooth
boundaries, but -- specially in the conservative case -- it contains
some parts of the boundary that are rather ragged. A tentative explanation
(\cite{McKay92}) is that  the smooth parts of the
the boundary of the region of existence are the intersection of the
family considered with the stable manifold of  fixed point of
renormalization. Even if this is not a completely rigorous picture,
there has been significant mathematical progress in verifying it
in an open set of families. We hope that,
in the future there could be more progress in this area.

One important advantage of the  Sobolev method is that it can be programmed
systematically and run unattended. The Greene's method relies on
periodic orbits and one has to pay attention to making sure that the
periodic orbits are continued correctly. We also note that the
Sobolev method works for models of long range interaction in
Statistical Mechanics without a dynamical interpretation.

\subsection{Greene's method, periodic orbits and Arnold's tongues}\label{sec:greene}

The method developed by J. Greene in \cite{Greene79} is based on the conjecture that
the breakdown of an invariant curve with frequency $\omega$, say $\C(\omega)$, is related to
a change from stability to instability of the periodic orbits $\PP({{p_j}\over {q_j}})$
with frequencies ${{p_j}\over {q_j}}$ tending to $\omega$. We observe that a standard
procedure to obtain the rational approximants of $\omega$ is to compute the successive
truncations of the continued fraction representation of $\omega$.

Greene's method has been successfully developed for the conservative standard map for which a partial justification is given in
\cite{FalcoliniL92,McKay92}. In the dissipative case, there appears an extra difficulty due to the fact that the periodic orbits
with frequency ${{p_j}\over {q_j}}$ occur in a whole interval of the drift parameter.
This phenomenon gives rise to the appearance of the so-called \sl Arnold tongues. \rm
Figure~\ref{arntongues}, left panel, gives a graphical
representation of the Arnold tongues; having fixed a value of the dissipative parameter
$\varepsilon$, there is a whole interval of the drift parameter $\mu$
which admits a periodic orbit of the same period. The right panel of Figure~\ref{arntongues}
shows several periodic orbits approaching the torus with frequency equal to the golden
mean; such periodic orbits have frequency equal to the rational approximants which
are given by the ratio of the Fibonacci numbers.

A partial justification of an extension of Greene's criterion in the conformally symplectic case is presented in \cite{CCFL14},
where it is proved that if there exists a smooth invariant attractor, one can predict the eigenvalues of
the periodic orbits approximating the torus for parameters close to those of the attractor.

\begin{figure}[ht]
\centering
\includegraphics[width=6truecm,height=5truecm]{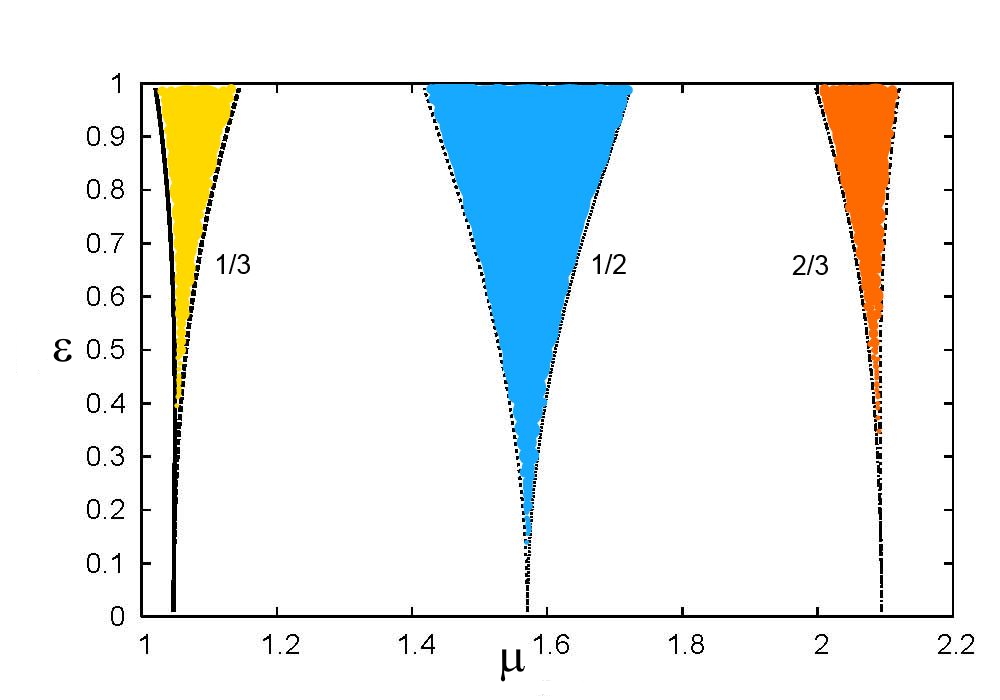}
\includegraphics[width=6cm,height=4.5cm]{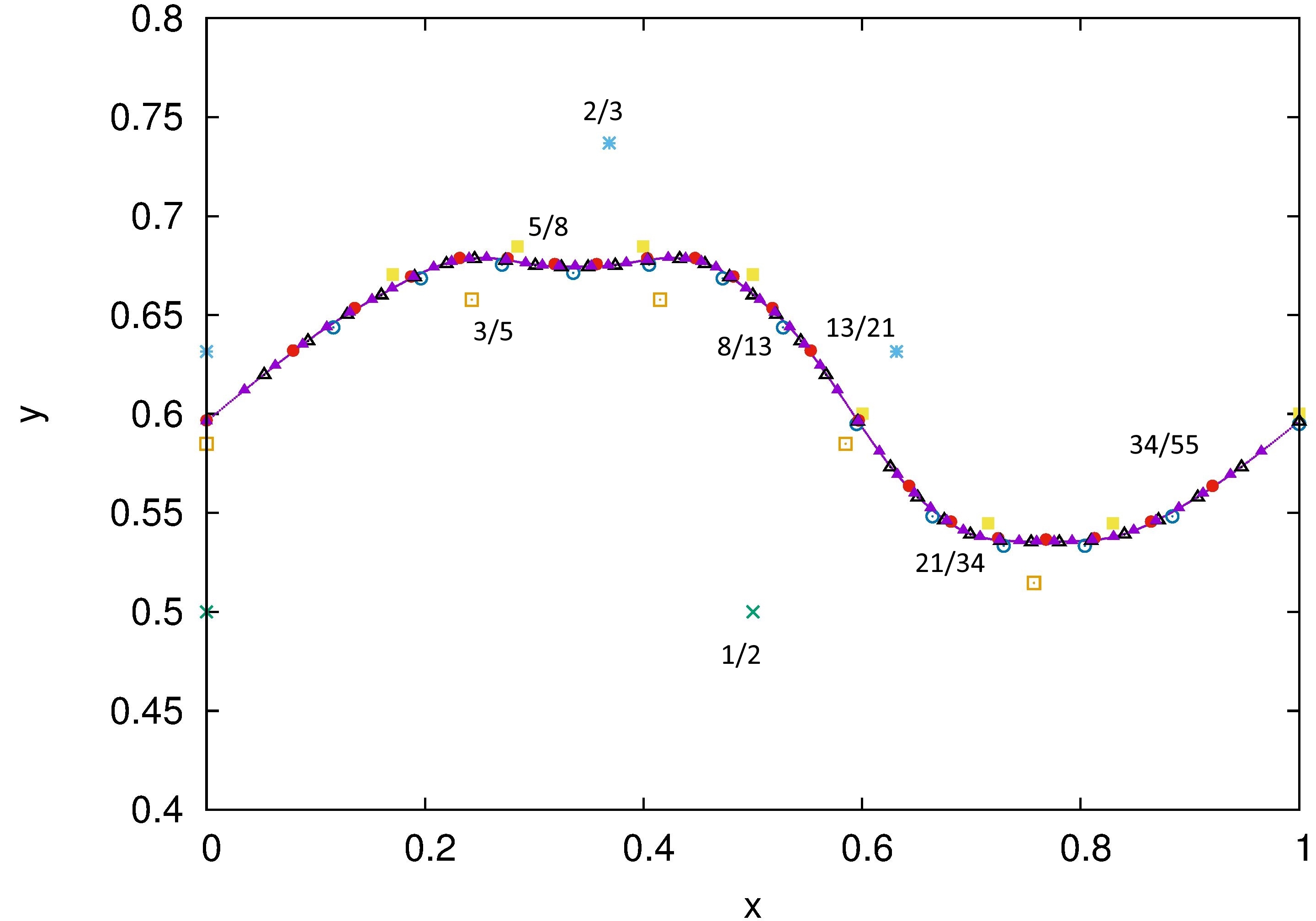}
\caption{Left: Arnold's tongues providing $\mu$ vs. $\varepsilon$ for three periodic orbits of
the dissipative standard map with periods 1/3, 1/2, 2/3. Right: periodic orbits of the dissipative standard map
approximating the golden mean curve.} \label{arntongues}
\end{figure}

Figure~\ref{drift} shows some approximating periodic orbits (left panels) and the corresponding behaviour
of the drift parameter (right panels) that, in the limit, tends to the value of the drift that
corresponds to the golden mean torus.

\begin{figure}
\centering
\includegraphics[width=4.9cm,height=5cm]{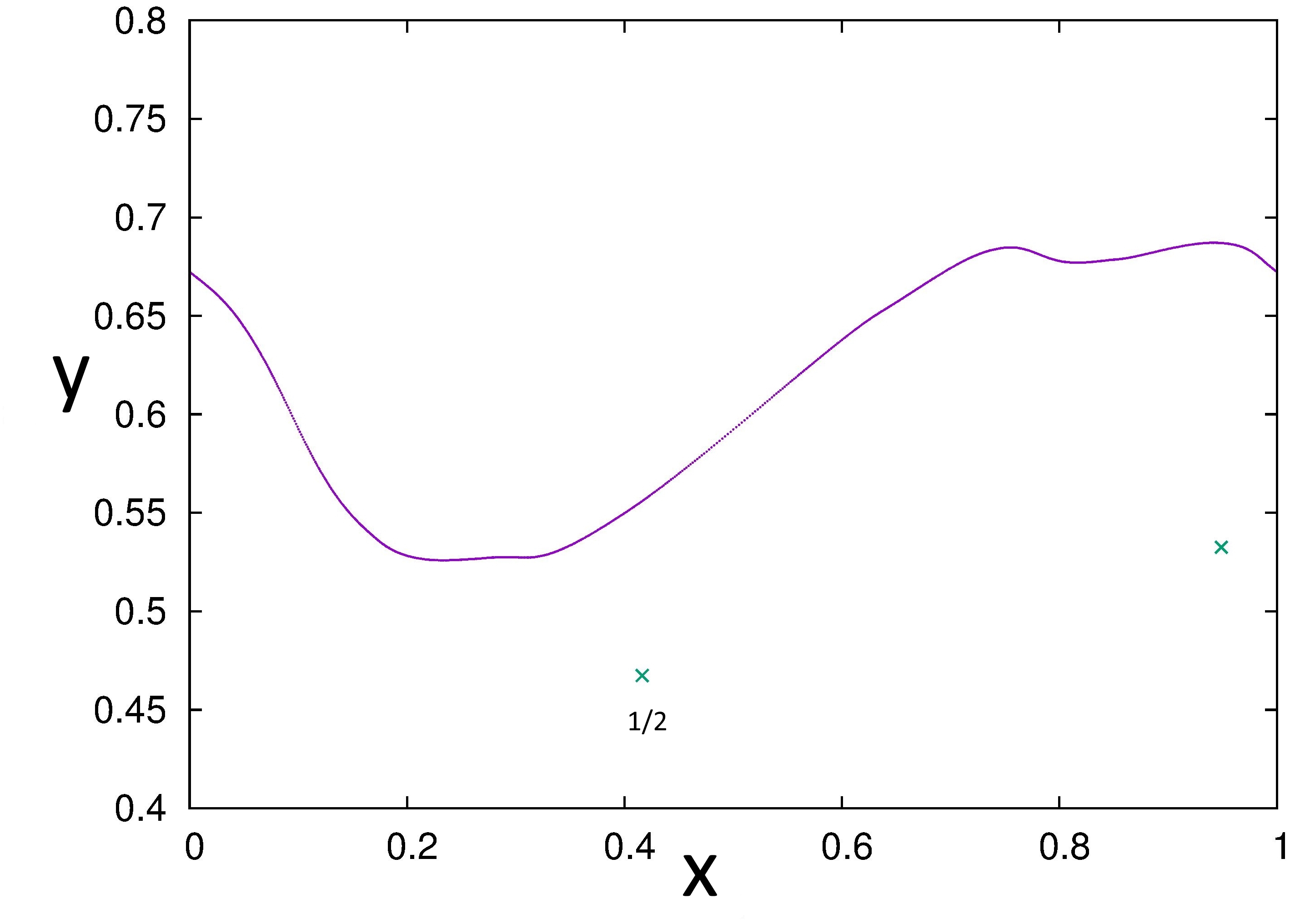}
\includegraphics[width=5.5cm,height=5.5cm]{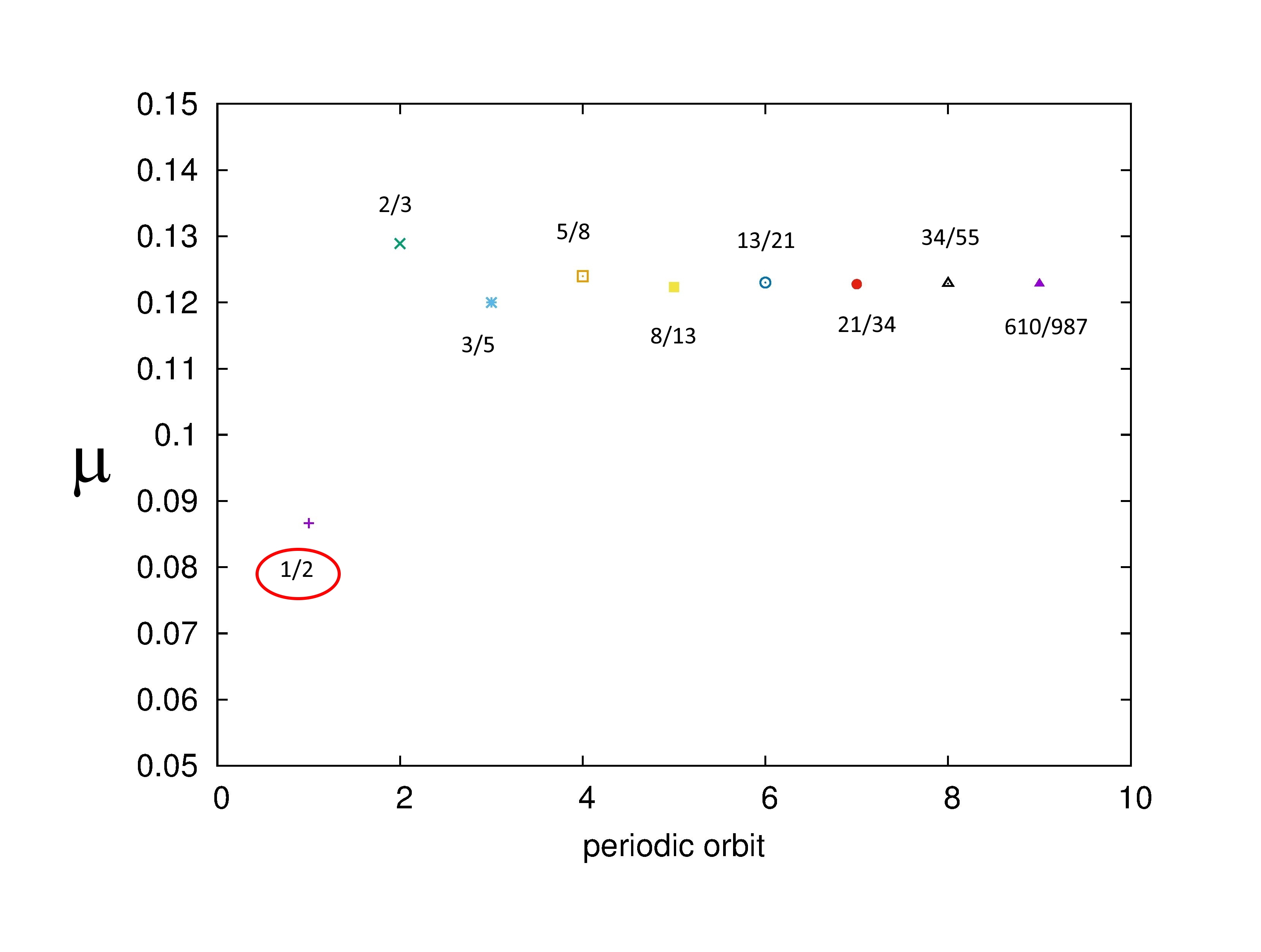}\\
\includegraphics[width=5cm,height=5cm]{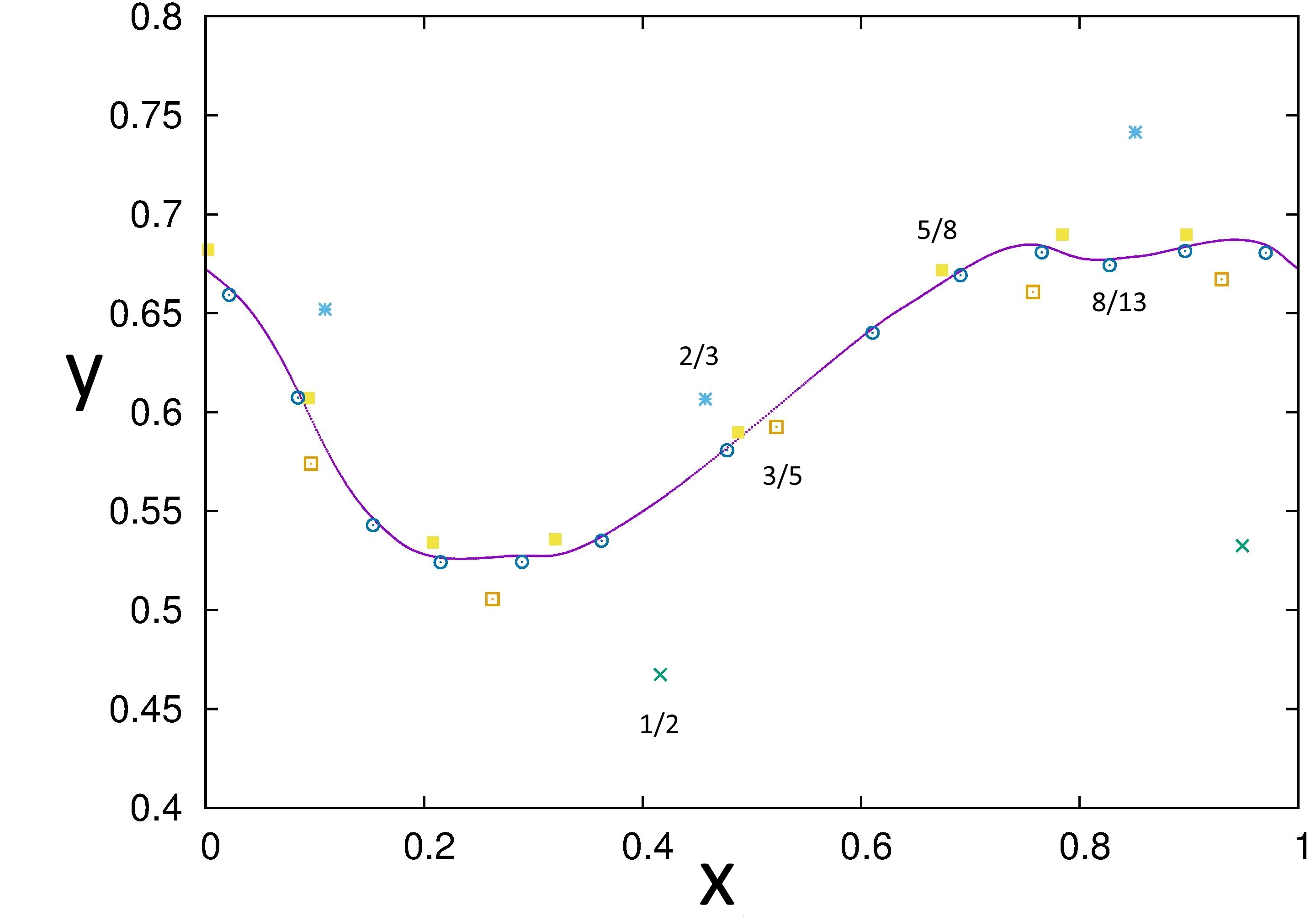}
\includegraphics[width=5.5cm,height=5.5cm]{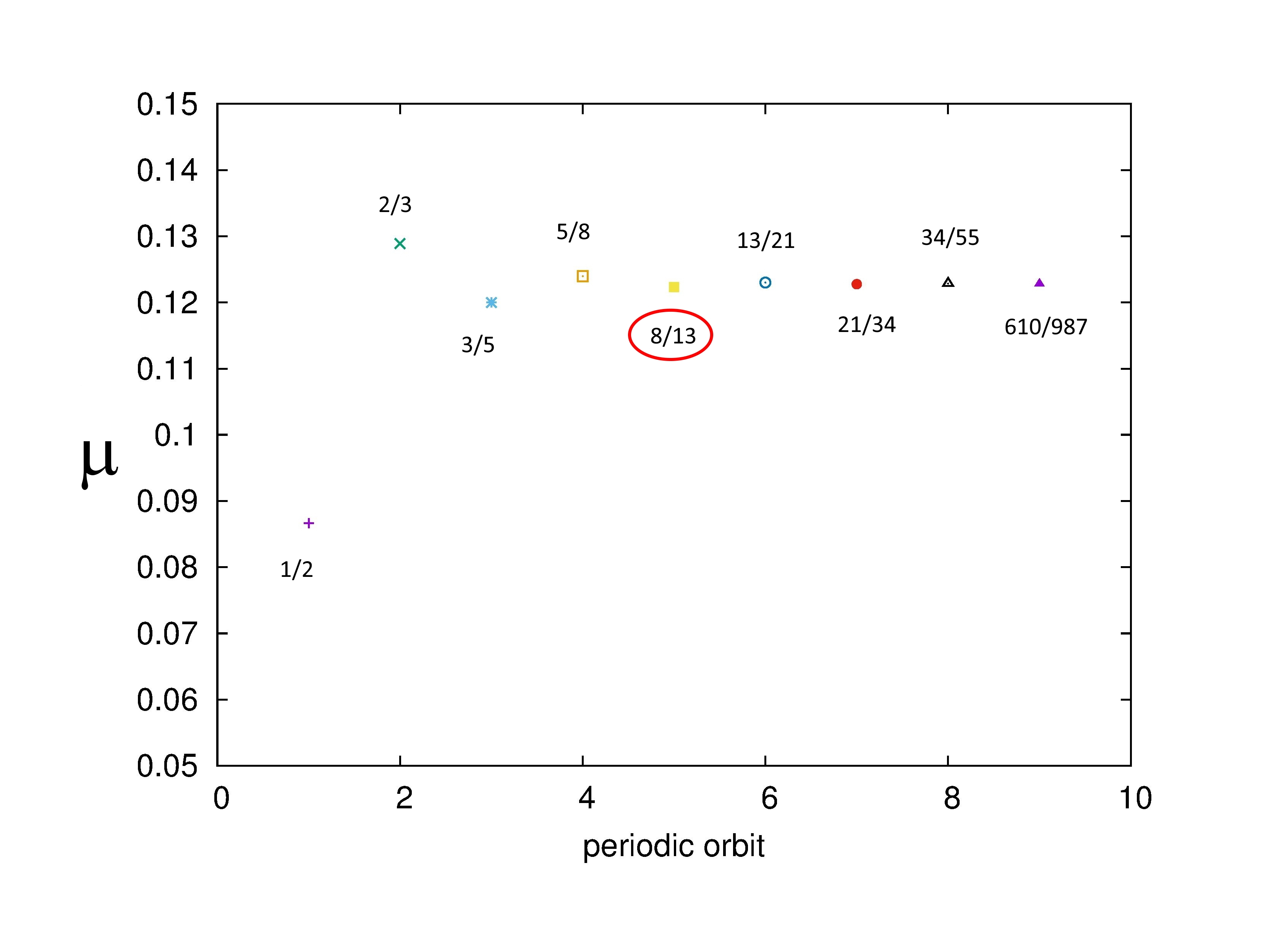}\\
\includegraphics[width=5cm,height=5cm]{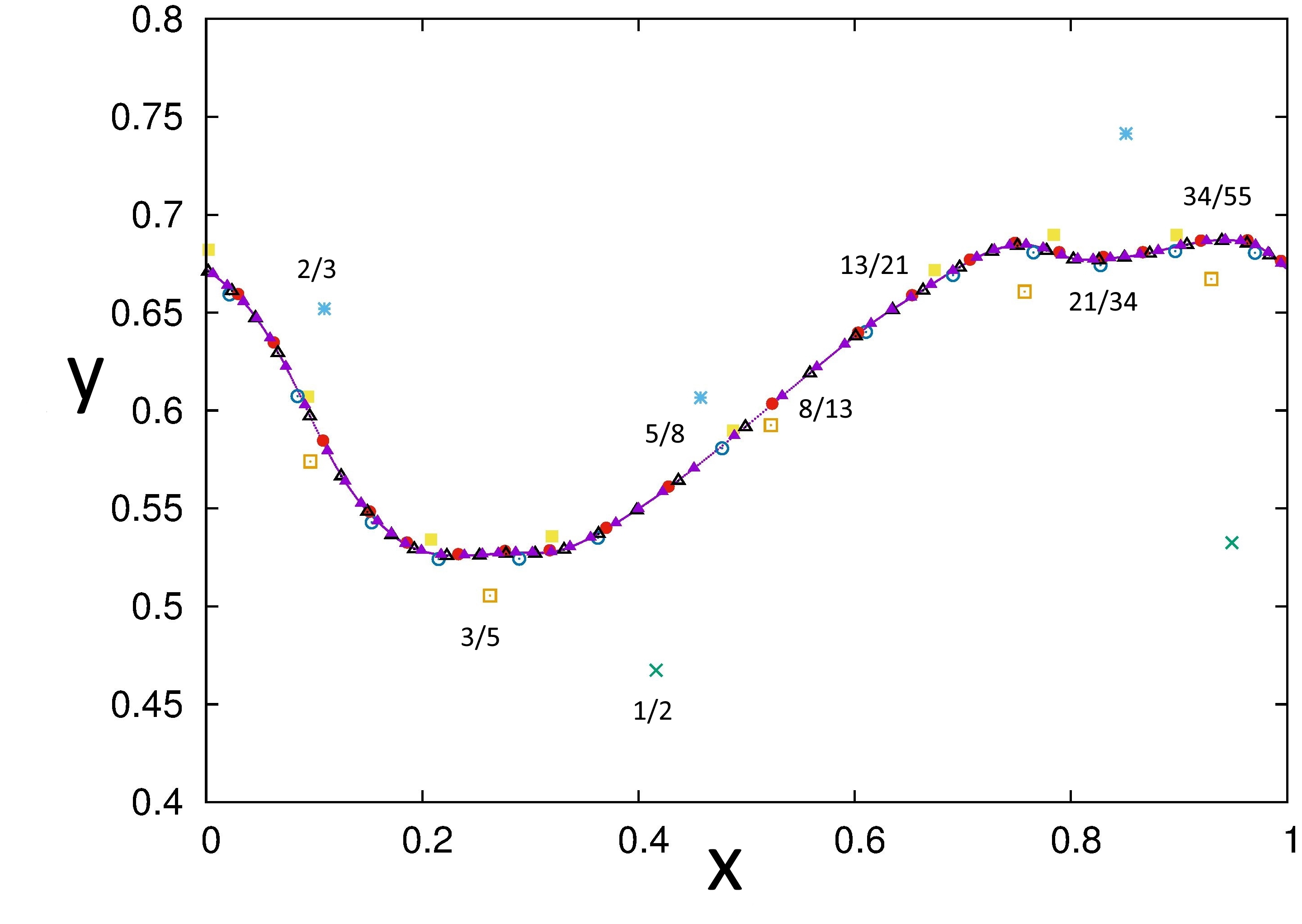}
\includegraphics[width=5.5cm,height=5.5cm]{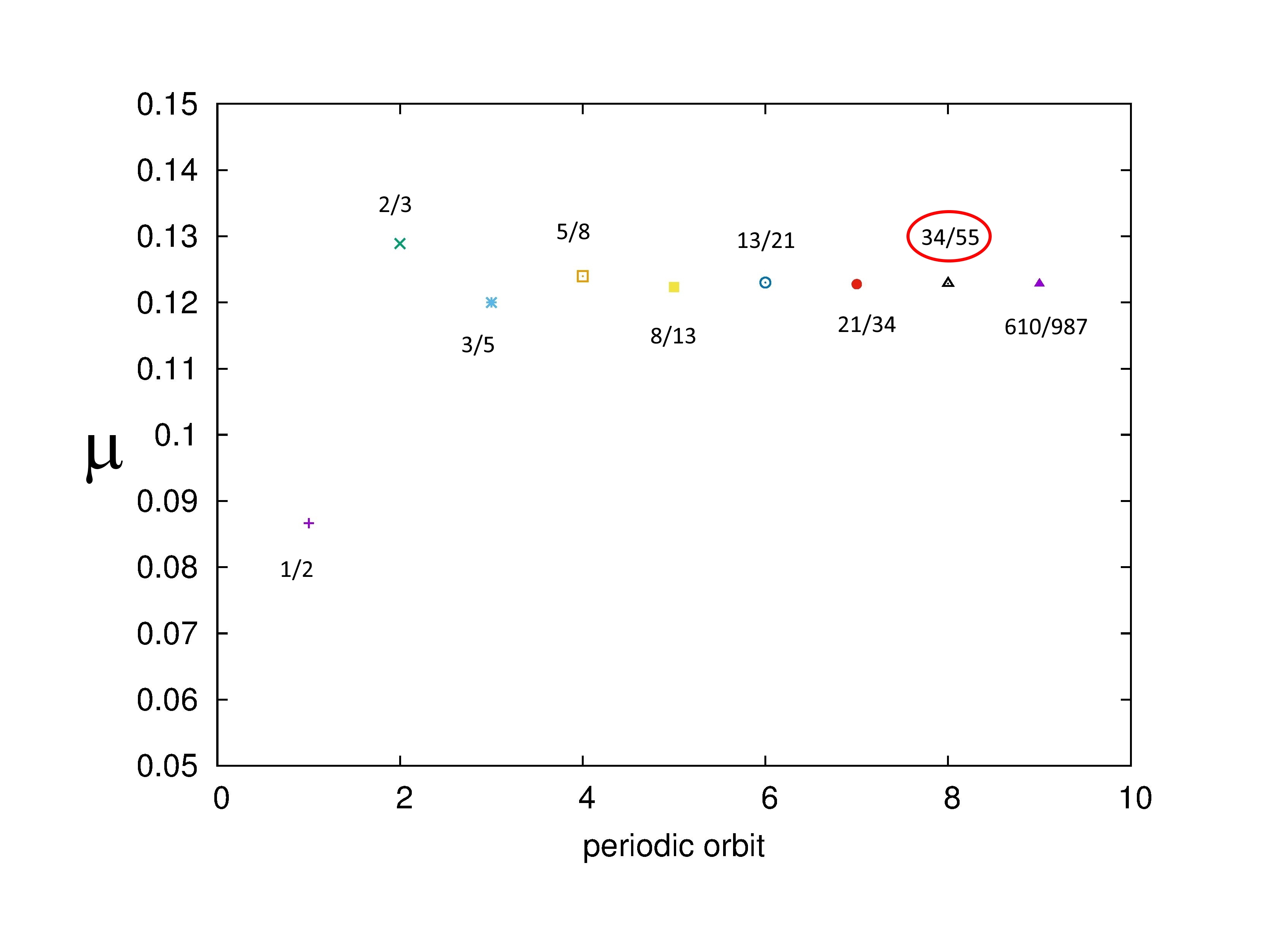}
\caption{Left: periodic orbits with increasing periods, approximating the golden mean curve.
Right: the corresponding drift parameters with the successive periodic orbits labeled by
integer numbers on the $x$-axis.} \label{drift}
\end{figure}

We also call attention to \cite{CellettiMK07} which contains
tentative results on the non-existence of invariant tori for the spin-orbit models.
Even if the methods developed there are not rigorous, they may
present a counterpoint to the methods to study the existence.

\section{Collision of invariant bundles of quasi-periodic attractors}\label{sec:collision}

Quasi-periodic attractors
of conformally symplectic maps are normally hyperbolic invariant manifolds (NHIM).
We can obtain the Lyapunov multipliers of the attractor from a simple computation.
We start from
the invariance equation \equ{invariance} for a pair $(K, \mu)$. We then introduce a change
of variables to reduce the cocycle. Let
$\tilde M({\underline \theta})=[DK({\underline \theta})\,|\,E^s({\underline \theta})]$
be the matrix whose columns
are the tangent and stable bundles of $\mathcal{K} = K(\torus^n)$:
\begin{equation}\label{reducibility}
Df_\mu \circ K ({\underline \theta}) \tilde M({\underline \theta}) = \tilde M({\underline \theta} + {\underline \omega})
\left(\begin{array}{cc} 1 & 0\\
0 & \lambda \\ \end{array}\right)\ .
\end{equation}
From equation \equ{reducibility} we can write the stable bundle as follows
$$
E^s({\underline \theta}) = DK({\underline \theta}) B({\underline \theta})
+ J^{-1}DK({\underline \theta})N({\underline \theta})\ ,
$$
where $B({\underline \theta})$ is the function that satisfies
$$
B({\underline \theta}) - \lambda B({\underline \theta} + {\underline \omega}) = -S({\underline \theta})\ .
$$
Indeed, after $j$ iterates of the map we have that,
$$
Df_\mu^j \circ K ({\underline \theta}) = \tilde M({\underline \theta} + {\underline \omega})
\left(\begin{array}{cc} 1 & 0\\
0 & \lambda^j \\ \end{array}\right)\ \tilde M^{-1}({\underline \theta}) \ ,
$$
which shows that the tangent space of $\mathcal M$ at $K(\theta)$ is
$$
T_{K(\theta)} \mathcal M = \mathrm{Range }(DK(\theta)) \oplus E_{K(\theta)}^s\ .
$$
We can conclude that there exists a constant $C$ such that
\beqano
C^{-1}\lambda^j |v|\leq |Df_\mu^j \circ K ({\underline \theta})\ v|\leq C\lambda^j|v|\ ,\qquad
v\in E^s_{K({\underline \theta})}\ ,\nonumber\\
C^{-1}\ |v|\leq |Df_\mu^j \circ K ({\underline \theta})\ v|\leq C\ |v|\ ,\qquad\
v\in E^c_{K({\underline \theta})}\ , \eeqano showing that $\mathcal
K=K(\torus^n)$ is a NHIM. Equation \equ{reducibility} also tells us that the
Lyapunov multipliers are constant
along the family of quasi-periodic attractors for fixed Diophantine vectors.

In the case of maps of the cylinder $\mathcal{M} = \mathbb R \times \mathbb T$,
we know that the curve $\mathcal K$ is $C^r$, one dimensional,
and since
$\omega$ satisfies the Diophantine condition, we know by the results of
\cite{Herman79,SinaiK87,KatznelsonO89a,KatznelsonO89b} that the
map conjugating the dynamics in $\mathcal K$ to a rigid rotation
is in $C^{r-\tau-\delta}$ for a small $\delta > 0$.
Therefore, by the bootstrap
of regularity results\footnote{i.e., all tori
  which are smooth enough are analytic if the map is analytic (\cite{CallejaCL11}).},
the conjugacy is analytic for analytic maps. Since the bundles depend on
the conjugacy, then the
regularity of the manifold implies the analyticity of $K$ and
the bundles up to the breakdown.

To investigate the breakdown of
normal hyperbolicity, we note that, because of the pairing rule of Lyapunov
exponents \cite{DettmannM96,WojtkowskiL98}, since one Lyapunov multiplier
is $1$ -- the one along the tangent directional (remember that the
map on the torus is smoothly conjugate to the torus) -- the other one is
precisely $\lambda$.

We recall that hyperbolicity is equivalent to the  existence of
\sl transversal \rm invariant bundles with different rates.
In our case, if the tori have to cease to be normally hyperbolic,
because the exponents remain constant, the only thing that can
happen is that the transversality of the bundles deteriorates.

What is found empirically is that the breakdown happens
because at the same time the regularity of the conjugacy
deteriorates quantitatively (even if the conjugacy remains
analytic, some Sobolev norm blows up).

At the same parameter values,
the breakdown of hyperbolicity happens via
the stable and tangent bundle collision.
Even if the  Lyapunov exponents remain safely away, the transversality
deteriorates and the tangent and stable bundles become close to
tangent.

In the case at hand, we can make a very detailed study:
the bundles are
one dimensional and we  compute a formula for the angle between the bundles
for every $\theta$. In fact, let $\alpha(\theta)$ be the angle between the stable and
tangent bundles for every $\theta \in \torus$, then we have
$$
\alpha(\theta) = \arctan\left( \frac{1}{B(\theta)(DK(\theta)^T DK(\theta))} \right).
$$
This formula says that the angle $\alpha(\theta)$ goes to zero at points where the functions
in the denominator go to infinity.

We present figures (see Figure~\ref{figdrif2}) of the angle between the bundles close to the breakdown.

\begin{figure}[h!]
\centering
\includegraphics[width=6truecm,height=6truecm]{./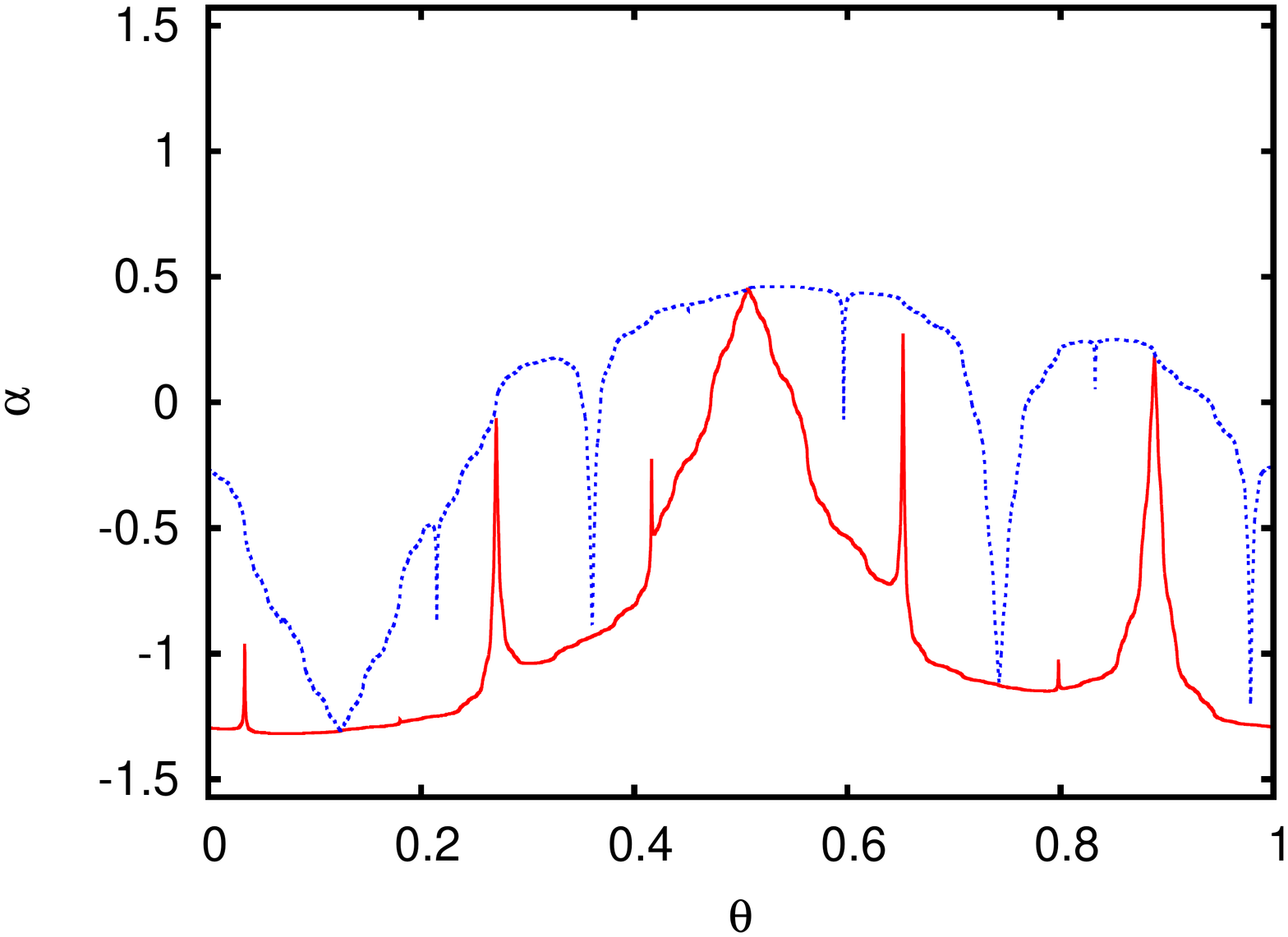}
\includegraphics[width=6truecm,height=4.0truecm]{./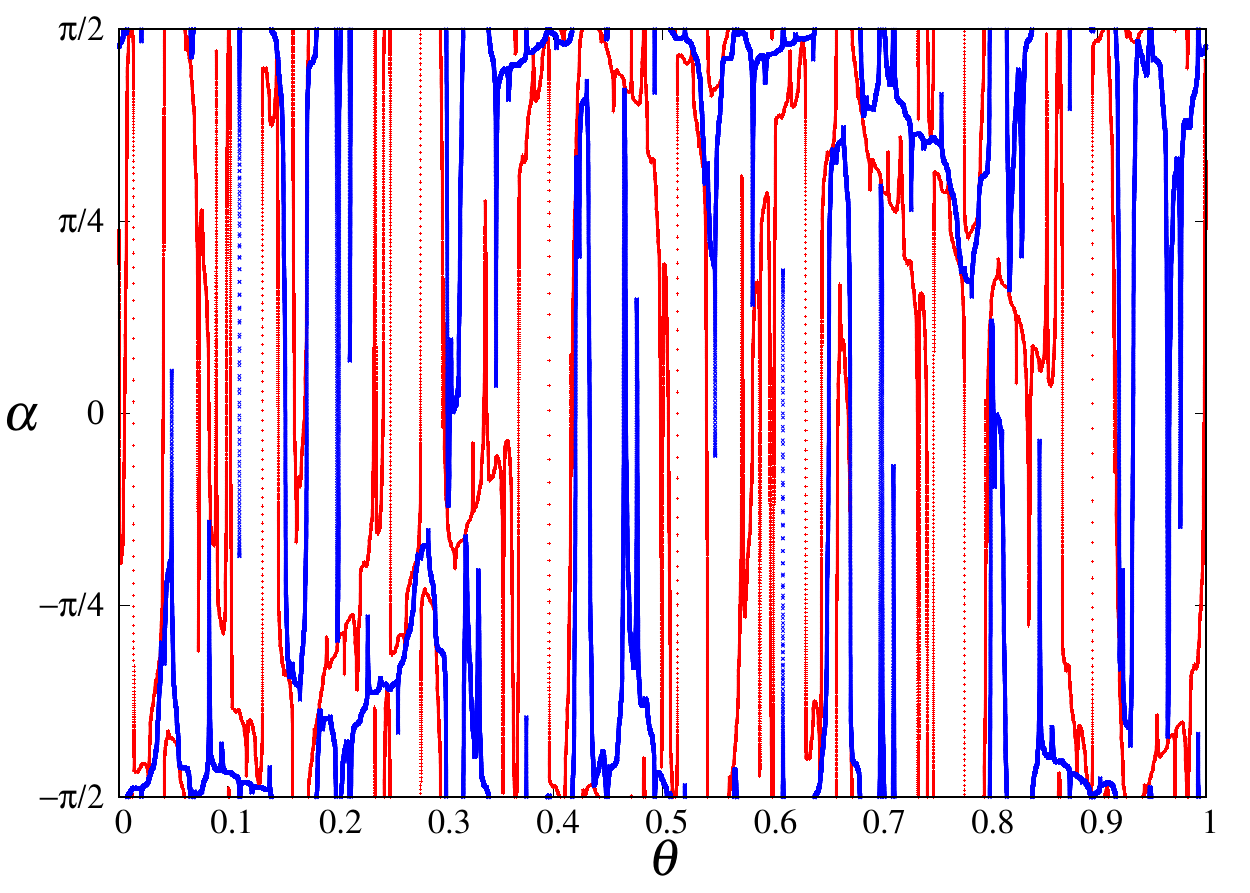}\\
\caption{Invariant bundles close to their collision. Left: dissipative standard map.
  Right: dissipative standard non-twist map. Reproduced from \cite{Cal-Can-Har-20}.}
  \label{figdrif2}
\end{figure}

Rather remarkably these two phenomena (the blow up of Sobolev norms
and the stable bundles and the tangent becoming parallel) happen
at the same time and present very unexpected regularities.
There are scaling relations that seem to be independent of
the family considered  and they happen in codimension $1$ smooth submanifolds
in the space of maps.    We think that this is a very interesting
mathematical phenomenon that deserves rigorous study. It seems
quite unlikely that it would have been discovered except for the
very careful numerics that can explore with confidence close to
the breakdown. Such delicate numerics are only possible because
of the rigorous mathematical development.

\section{Applications}\label{sec:applications}

In this Section we want to briefly review some applications of KAM theory for conservative and dissipative models.
We will consider applications to the standard map and to the spin-orbit problem, both in the conservative and dissipative
settings. Although we will not present other applications of KAM theory,
it is worth mentioning also the constructive KAM results to the N-body and planetary problems in Celestial Mechanics (\cite{Poincare});
in this context,
for results obtained in the conservative framework we refer the reader to \cite{Arnold63b,CellettiC97,MR2267954,CellettiC07,LGS1,LGS2}
and to \cite{MR2777134} for numerical investigations including dissipative effects.

\subsection{Applications to the standard maps}
The first applications of computer-assisted KAM proofs have been given for the conservative standard map;
these results show that the golden mean torus persists for values of the perturbing parameter equal to
$93\%$ of the numerical breakdown value (see \cite{LlaveR90,LlaveR90b}); we also mention \cite{CellettiC95} which,
at the same epoch but using a different approach than \cite{LlaveR90,LlaveR90b}, reached $86\%$ of the numerical breakdown value.

Rigorous estimates for the conservative standard map using the a-posteriori method have been proved in the remarkable paper \cite{FiguerasHL17}, where for
the twist and non-twist conservative standard maps the golden mean torus is proved to persist
for values of the perturbing parameter as high as 99.9\% of the numerical breakdown value.

For the dissipative standard map, the paper \cite{CCL2020} analyzes the persistence of the invariant
attractor with frequency equal to the golden mean and for a fixed value of the dissipative parameter (precisely
$\lambda=0.9$); such persistence is shown for values of the perturbing parameter equal to 99.9\% of the breakdown value, where the numerical value
has been obtained through the techniques presented in Sections~\ref{sec:sobolev} and \ref{sec:greene}.

\subsection{Applications to the spin--orbit problems}
The first application of KAM theory to the conservative spin-orbit problem is found in \cite{Celletti90I,Celletti90II}.
In those articles some satellites in synchronous spin-orbit resonance have been considered; the synchronous or
1:1 spin-orbit resonance implies that the satellite always points the same face to the host planet.
In particular, the following satellites have been considered: the Moon, and three satellites of Saturn, Rhea, Enceladus, Dione.
Being the normalized frequency (namely, the ratio between the rotational and orbital frequency) equal to one,
two Diophantine numbers bounding unity from above and below have been considered. Through a computer-assisted
KAM theorem, the existence of invariant tori with frequency equal to the bounding numbers have been established
for the true values of the parameters of the satellites, namely the eccentricity and the equatorial oblateness.

\vskip.1in

Such result guarantees the stability for infinite times in the sense of confinement in the phase space.
In fact, the phase space associated to the Hamiltonian describing the conservative spin-orbit problem
is 3-dimensional; since the KAM tori are 2-dimensional, one gets a confinement of the motion
between the bounding invariant tori.

We remark that the confinement is no more valid for $n>2$ degrees of freedom, since the motion can diffuse through invariant tori,
reaching arbitrarily far regions; this phenomenon is known as Arnold's diffusion (\cite{Arnold64}) for which we refer
to the extensive literature on this topic (see, e.g., \cite{DelshamsLS2006,GideaLS2020} and references therein).

\vskip.1in

For the dissipative spin-orbit problem, we refer to \cite{CellettiC2009} for the development
of KAM theory for a model of spin-orbit interaction with tidal torque as in \equ{SOeqdiss}.
Precisely, for $\lambda_0\in{\real_+}$ and $\omega$ Diophantine,
it is proven that there exists $0<\varepsilon_0<1$, such that
for any $\varepsilon\in[0,\varepsilon_0]$ and any $\lambda\in[-\lambda_0,\lambda_0]$ there exists
a unique function $K=K(\theta,t)$ and a drift term $\mu$ which is the solution of the invariance equation
for the dissipative spin-orbit model.

Explicit estimates for the dissipative spin-orbit problem, even in the more general case
with a time-dependent tidal torque as in \equ{sodiss1}, are given in \cite{CCGL2020} (see also \cite{Locatelli}). Here, the a-posteriori
method is implemented to construct invariant attractors with Diophantine frequency; the results are valid
for values of the perturbing
parameter consistent with the astronomical values of the Moon and very close to the numerical breakdown
threshold, which has been computed in \cite{CCGL2020} through Sobolev and Greene's method (see also \cite{StefanelliL15}).

\vglue1cm

\noindent
{\bf Acknowledgements.} R.C. was partially supported by DGAPA-UNAM Project IN 101020.
A.C. acknowledges the MIUR Excellence
Department Project awarded to the Department of Mathematics,
University of Rome Tor Vergata, CUP E83C18000100006, EU-ITN Stardust-R,
MIUR-PRIN 20178CJA2B ``New Frontiers of Celestial Mechanics: theory and
Applications''.
R.L was partially supported by NSF grant DMS 1800241.


\def\cprime{$'$} \def\cprime{$'$} \def\cprime{$'$} \def\cprime{$'$}

\end{document}